\definecolor{codegreen}{rgb}{0,0.6,0} 
\definecolor{codegray}{rgb}{0.5,0.5,0.5} 
\definecolor{codepurple}{rgb}{0.58,0,0.82} 
\definecolor{backcolour}{rgb}{0.95,0.95,0.92} 
\lstdefinestyle{mystyle}{
language=Python,
backgroundcolor=\color{backcolour},
commentstyle=\color{codegreen},
keywordstyle=\color{magenta},
numberstyle=\tiny\color{codegray},
stringstyle=\color{codepurple},
basicstyle=\ttfamily\footnotesize,
breakatwhitespace=false,
breaklines=true,
captionpos=b,
keepspaces=true,
numbers=left,
numbersep=5pt,
showspaces=false,
showstringspaces=false,
showtabs=false,
tabsize=2
}
\declaretheoremstyle[
headfont=\normalfont\bfseries,
bodyfont=\normalfont\itshape,
notefont=\mdseries,
notebraces={(}{)},
headindent=\parindent,
postheadspace=0.5em,
headpunct=.,
]{plain}
\declaretheoremstyle[
headfont=\normalfont\itshape,
bodyfont=\normalfont\rmfamily,
notefont=\mdseries,
notebraces={(}{)},
headindent=\parindent,
postheadspace=0.5em,
headpunct=.,
qed=\qedsymbol
]{remark}
\declaretheoremstyle[
headfont=\normalfont\bfseries,
bodyfont=\normalfont\rmfamily,
notefont=\mdseries,
notebraces={(}{)},
headindent=\parindent,
postheadspace=0.5em,
headpunct=.,
]{definition} 
\numberwithin{equation}{section}     
\declaretheorem[numberwithin=section, name=Theorem]{theorem} 
\declaretheorem[sibling=theorem, name=Lemma]{lemma} 
\declaretheorem[sibling=theorem, name=Corollary]{corollary*}
\declaretheorem[sibling=theorem, style=remark, name=Remark]{remark} 
\declaretheorem[sibling=theorem, style=remark, name=Example]{example} 
\NewDocumentCommand\setA{}{\mathscr{A}} 
\DeclareMathOperator{\conv}{conv}
\DeclareMathOperator{\absco}{absco} 
\NewDocumentCommand\ldotsb{}{\ldots\,} 
\begin{document} 

\title[Notes on Simplifying the Construction of Barabanov Norms]{Notes on 
Simplifying the Construction of Barabanov Norms}

\author{Victor Kozyakin}

\thanks{The research is supported by the MSHE ``Priority 2030'' strategic academic leadership
program.}

\address{Higher School of Modern Mathematics MIPT,
9 Institutskiy per., Dolgoprudny, Moscow Region, 141701, Russian Federation} 

\email{koziakin.vs@mipt.ru} 

\keywords{Joint/generalized spectral radius, Barabanov norm, 
Dranishnikov-Konyagin bodies, JSR Toolbox} 

\subjclass[2020]{Primary 15A18; Secondary 15A60, 65F15} 

\date{} 

\begin{abstract} 
  To answer the question about the growth rate of matrix products, the 
  concepts of joint and generalized spectral radius were introduced in the 
  1960s. A common tool for finding the joint/generalized spectral radius is 
  the so-called extremal norms and, in particular, the Barabanov norm. The 
  goal of this paper is to try to combine the advantages of different 
  approaches based on the concept of extremality in order to obtain results 
  that are simpler for everyday use. It is shown how the 
  Dranishnikov--Konyagin theorem on the existence of a special invariant body 
  for a set of matrices can be used to construct a Barabanov norm. A modified 
  max-relaxation algorithm for constructing Barabanov norms, which follows 
  from this theorem, is described. Additional techniques are also described 
  that simplify the construction of Barabanov norms under the assumption that 
  some extremal norm is initially known. 
\end{abstract} 

\maketitle 
\section{Introduction}\label{S:intro} 
In various fields of mathematics, control theory, physics, 
etc.~\cite{Jungers:09, Koz:IITP13} the question arises about the growth/decay 
rate of matrix (operator) products with factors from some sets of matrices 
(linear operators)~$\setA$. If the set~$\setA$ consists of one element, this 
question is solved by calculating the spectral radius of the corresponding 
matrix. But in the case when the set~$\setA$ contains more than one element, 
this question turns out to be very complex and does not have any 
algorithmically or computationally ``simple'' answer~\cite{Koz:AiT90:6:e, 
TB:MCSS97:1, Koz:AiT03:9:e}. 

To answer the question about the growth rate of matrix products, in the 
1960s, the analytical concepts of joint~\cite{RotaStr:IM60} and 
generalized~\cite{DaubLag:LAA92} spectral radius of a set of matrices~$\setA$ 
were introduced. Let us recall the corresponding concepts, following the 
works~\cite{Koz:CDC05:e, Koz:INFOPROC05:e, Koz:INFOPROC06:e}. 

Let $\setA=\{A_{1},\ldots,A_{m}\}$ be a set of real $d\times d$ matrices and 
${\|\cdot\|}$ be some norm in~$\mathbb{R}^{d}$. With each finite set of 
symbols $\boldsymbol{\sigma}= 
\{\sigma_{1},\sigma_{2},\ldots,\sigma_{n}\}\in{\{1,\ldots,m\}}^{n}$, where 
${n\ge1}$, we associate the matrix 
$A_{\boldsymbol{\sigma}}=A_{\sigma_{n}}\cdots A_{\sigma_{2}}A_{\sigma_{1}}$ 
and define two numerical quantities: 
\begin{equation}\label{eq:defrA} 
  \rho_{n}({\setA})=\max_{\boldsymbol{\sigma}\in{\{1,\ldots,m\}}^{n}} 
  \|A_{\boldsymbol{\sigma}}\|^{1/n},\qquad 
  \bar{\rho}_{n}({\setA})=\max_{\boldsymbol{\sigma}\in{\{1,\ldots,m\}}^{n}} 
  {\rho(A_{\boldsymbol{\sigma}})}^{1/n}, 
\end{equation} 
where $\rho(\cdot)$ denotes the spectral radius of a matrix. In this 
notation, the limit 
\begin{equation}\label{eq:JSR} 
  \rho({\setA})= \limsup_{n\to\infty}\rho_{n}({\setA}), 
\end{equation} 
which does not depend on the choice of the norm $\|\cdot\|$ and in fact 
coincides with the limit $\rho({\setA})=\lim_{n\to\infty}\rho_{n}({\setA})$, 
is called the \emph{joint spectral radius} of the set of matrices 
$\setA$~\cite{RotaStr:IM60}. Similarly, we can consider the limit 
\begin{equation}\label{eq:GSR} 
  \bar{\rho}({\setA})= \limsup_{n\to\infty}\bar{\rho}_{n}({\setA}), 
\end{equation} 
called the \emph{generalized spectral radius} of the matrix 
set~$\setA$~\cite{DaubLag:LAA92}. For bounded matrix sets~$\setA$, the 
quantities $\rho({\setA})$ and $\bar{\rho}({\setA})$ coincide with each 
other~\cite{BerWang:LAA92}, and 
\begin{equation}\label{eq:sprad} 
  \bar{\rho}_{n}({\setA})\le \bar{\rho}({\setA})=\rho({\setA})\le 
  \rho_{n}({\setA}),\qquad\forall~n. 
\end{equation} 

Finding the values of $\rho({\setA})$ and $\bar{\rho}({\setA})$ turned out to 
be a rather difficult task in theoretical and algorithmic/computational 
terms. Nevertheless, quite meaningful and strong approaches were developed 
along this path. In particular, several algorithms were developed for 
calculating the values of $\rho({\setA})$ and 
$\bar{\rho}({\setA})$~\cite{GugZen:CDC05, Koz:DCDSB10, GugProt:FCM13, 
VHJ:ACM14}. At present, the most developed set of algorithms is probably the 
one described in~\cite{Mejstrik:ACMTMS20, MejReif:LAA25}. This set of 
algorithms combines many earlier algorithms from other authors and is 
implemented as an extension package 
\href{https://gitlab.com/tommsch/ttoolboxes}{t-toolboxs} for the MATLAB 
program. 

In the late 1980s, in~\cite{Bar:AIT88-2:e,Bar:AIT88-3:e,Bar:AIT88-5:e}, a 
geometric approach to the problem of estimating the growth rate of matrix 
products was proposed, which subsequently became one of the main methods of 
analysis in this theory. This approach consists of proving the existence, for 
a set of matrices~$\setA$, of norms or invariant sets satisfying certain 
special relations. Among the subsequent works, we 
highlight~\cite{Wirth:LAA02}. 

Let us recall the relevant facts. If for some norm $\|\cdot\|$ a number 
$\rho>0$ can be found such that the identity 
\begin{equation}\label{eq:mane-bar} 
  \max_{i}\|A_{i}x\|\equiv\rho(\setA)\|x\| 
\end{equation} 
is satisfied, then such a norm is called the \emph{Barabanov norm}, 
corresponding to the set of matrices~$\setA$. For brevity, such a norm will 
be further called the \emph{\textit{B}-norm}. 

\begin{theorem}[N.E. Barabanov]\label{T:Bar} 
  Let the matrix set $\setA=\{A_{1},\ldots,A_{m}\}$ be irreducible\footnote{A 
  matrix set~$\setA$ is called \emph{irreducible} if the matrices 
  from~$\setA$ do not have common invariant subspaces distinct from $\{0\}$ 
  and ${\mathbb{R}}^{d}$.}. Then the number~$\rho$ is a joint 
  $($generalized$)$ spectral radius of~$\setA$ if and only if there exists a 
  norm $\|\cdot\|$ in ${\mathbb{R}}^{d}$ satisfying 
  identity~\eqref{eq:mane-bar}. 
\end{theorem}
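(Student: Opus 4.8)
The plan is to treat the two implications separately: the forward direction is a short computation, while the converse is the genuine construction.

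\emph{The ``if'' direction.} Suppose a norm $\|\cdot\|$ and a number $\rho>0$ satisfy $\max_i\|A_ix\|=\rho\|x\|$ for every $x$. First I would read off $\|A_ix\|\le\rho\|x\|$, so each $A_i$ has operator norm at most $\rho$; multiplying along a word gives $\|A_{\boldsymbol{\sigma}}\|\le\rho^{\,n}$ for $|\boldsymbol{\sigma}|=n$, whence $\rho_n(\setA)\le\rho$ and, by \eqref{eq:JSR}, $\rho(\setA)\le\rho$. For the reverse inequality I would use the identity to manufacture a long product with no contraction: starting from any $x_0\ne0$, at each step the left-hand maximum is attained at some index, so I can choose $i_k$ with $\|A_{i_k}x_{k-1}\|=\rho\|x_{k-1}\|$ and set $x_k=A_{i_k}x_{k-1}$; after $n$ steps the word $\boldsymbol{\sigma}$ satisfies $\|A_{\boldsymbol{\sigma}}x_0\|=\rho^{\,n}\|x_0\|$, so $\rho_n(\setA)\ge\rho$ for all $n$ and hence $\rho(\setA)\ge\rho$. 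Thus $\rho=\rho(\setA)$.

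\emph{The ``only if'' direction.} After rescaling $A_i\mapsto A_i/\rho(\setA)$ I may assume $\rho(\setA)=1$ and must build a norm with $\max_i\|A_ix\|=\|x\|$. I would proceed in two stages. Stage one constructs an \emph{extremal} norm: fixing any reference norm $\|\cdot\|_0$, put $v(x)=\sup_{\boldsymbol{\sigma}}\|A_{\boldsymbol{\sigma}}x\|_0$, the supremum over all finite words (including the empty one). Whenever $v$ is finite it is automatically a norm, since it dominates $\|\cdot\|_0$ and is therefore positive definite, and it is extremal, $v(A_ix)\le v(x)$, because left-multiplying every word by $A_i$ only shrinks the index set. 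Stage two turns $v$ into a Barabanov norm. By extremality the seminorms $p_n(x)=\max_{|\boldsymbol{\sigma}|=n}v(A_{\boldsymbol{\sigma}}x)$ satisfy $p_{n+1}\le p_n$, so $\|x\|_{\ast}:=\lim_{n\to\infty}p_n(x)$ exists. The decisive algebraic point is the \emph{exact} recursion $\max_i p_n(A_ix)=p_{n+1}(x)$, valid for every $n$ because each word of length $n+1$ is a length-$n$ word applied after a single leftmost letter $A_i$; passing to the limit, and using that a finite maximum of convergent sequences commutes with the limit, gives $\max_i\|A_ix\|_{\ast}=\|x\|_{\ast}$, the desired identity.

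It then remains to check that $\|\cdot\|_{\ast}$ is a genuine norm. It is a seminorm as a decreasing limit of seminorms, and the same recursion yields $\|A_ix\|_{\ast}\le\|x\|_{\ast}$, so its null set $N=\{x:\|x\|_{\ast}=0\}$ is $\setA$-invariant; irreducibility forces $N=\{0\}$ or $N=\mathbb{R}^d$, and the case $N=\mathbb{R}^d$ is excluded because it would mean that the length-$n$ products shrink uniformly (by Dini's theorem on the unit sphere), forcing $\rho(\setA)<1$. Hence $N=\{0\}$ and $\|\cdot\|_{\ast}$ is the sought Barabanov norm. The one nonroutine step, which I expect to be the main obstacle, is the finiteness of $v$ in stage one, equivalently the boundedness of the normalized products $\{\rho(\setA)^{-|\boldsymbol{\sigma}|}A_{\boldsymbol{\sigma}}\}$; this is exactly where irreducibility is indispensable. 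I would argue by contradiction: if these products are unbounded, I would normalize a blow-up sequence $C_k=A_{\boldsymbol{\sigma}^{(k)}}/\|A_{\boldsymbol{\sigma}^{(k)}}\|_0$, extract a convergent subsequence $C_k\to C\ne0$ by compactness of the matrix sphere, and show that $\operatorname{Im}C$ (or $\ker C$) is a nontrivial proper $\setA$-invariant subspace, contradicting irreducibility. Making the invariance rigorous—controlling how $\|A_iA_{\boldsymbol{\sigma}^{(k)}}\|_0$ compares with $\|A_{\boldsymbol{\sigma}^{(k)}}\|_0$ so that $\setA$-translates of the limit stay inside the limiting range—is the technical heart of the argument; this is the classical extremal-norm existence lemma for irreducible families, and it is presumably here that the Dranishnikov--Konyagin theorem offers a more constructive alternative.
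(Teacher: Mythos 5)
The paper does not actually prove Theorem~\ref{T:Bar}: it cites Barabanov's original article and only remarks that the norm there arises from ``a computationally non-constructive limit procedure''. Your construction $\|x\|_{*}=\lim_{n}\max_{|\boldsymbol{\sigma}|=n}v(A_{\boldsymbol{\sigma}}x)$ is exactly that classical limit procedure, and the parts you spell out are correct: the ``if'' direction, the monotonicity $p_{n+1}\le p_{n}$, the exact recursion $\max_{i}p_{n}(A_{i}x)=p_{n+1}(x)$ and the exchange of a finite maximum with the limit, the invariance of the null set $N$, and the exclusion of $N=\mathbb{R}^{d}$ via Dini plus submultiplicativity all go through.

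The one step that does not work as written is your sketch of the boundedness lemma in stage one. If $C=\lim_{k}A_{\boldsymbol{\sigma}^{(k)}}/\|A_{\boldsymbol{\sigma}^{(k)}}\|_{0}$, the subspace $\operatorname{Im}C$ is in general \emph{not} $\setA$-invariant: there is no reason why $A_{i}\operatorname{Im}C\subseteq\operatorname{Im}C$, and likewise $\ker C$ need not be invariant (invariance of $\ker C$ would require $CA_{i}x=0$ whenever $Cx=0$, which is a statement about right multiplication, not about the limit you took). The classical repair is to work with the whole family $\mathcal{Y}=\{BQC\}$ of two-sided translates of a suitably normalized limit point $Q$ by elements of the semigroup, first proving $\sup_{\mathcal{Y}}\|BQC\|\le1$ (this is where $\rho(\setA)=1$ enters quantitatively, via the subexponential growth of $\max_{|\boldsymbol{\sigma}|\le t}\|A_{\boldsymbol{\sigma}}\|$ and a diagonal choice of the blow-up subsequence), and then extracting the invariant subspace from $\operatorname{span}\bigcup_{Y\in\mathcal{Y}}\operatorname{Im}Y$ or $\bigcap_{Y\in\mathcal{Y}}\ker Y$, which \emph{are} invariant because $\mathcal{Y}$ is closed under left and right multiplication by $\setA$. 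You correctly flag this lemma as the technical heart, but the specific subspace you propose is the wrong one; with that substitution your argument becomes the standard proof that the paper defers to.
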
 

The proof of this theorem is simple~\cite[Thm.~2]{Bar:AIT88-2:e}, but 
unfortunately the corresponding norm $\|\cdot\|$ is defined in it as the 
result of some computationally non-constructive limit procedure. 

Another approach to finding the values of $\rho({\setA})$ and 
$\bar{\rho}({\setA})$ was developed in~\cite{PWB:CDC05, Prot:FPM96:e, 
PW:LAA08}. It is based on the following statement. 

\begin{theorem}[A.N. Dranishnikov, S.V. Konyagin]\label{T:DK} 
  If the set of matrices $\setA=\{A_{1},\ldots,A_{m}\}$ is irreducible, then 
  there exists a convex body\footnote{A body is a set with nonempty 
  interior.}~$M$ such that 
  \begin{equation}\label{eq:DKbody} 
    \rho M = \conv \left(\bigcup_{i}A_{i}M\right) 
  \end{equation} 
  for some $\rho>0$. Moreover, for any centrally symmetric body~$M$ 
  satisfying~\eqref{eq:DKbody} for some $\rho$, the equality $\rho = 
  \rho(\setA)$ holds. 
\end{theorem}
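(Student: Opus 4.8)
The plan is to treat the two assertions separately, since the genuinely hard part is the \emph{existence} of $M$, whereas the identity $\rho=\rho(\setA)$ in the ``moreover'' clause is short once a centrally symmetric body is in hand. For existence I would study the set-valued operator $T(M)=\conv(\bigcup_i A_iM)$ acting on centrally symmetric convex bodies: it is monotone under inclusion, positively homogeneous, $T(\lambda M)=\lambda T(M)$, and continuous in the Hausdorff metric, and an eigen-body $T(M)=\rho M$ is exactly what we seek. After rescaling the $A_i$ by $1/\rho(\setA)$ I may assume $\rho(\setA)=1$ and look for a genuine fixed body $T(M)=M$; reinstating the scaling at the end produces the asserted $\rho$. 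The first and most delicate input is the quantitative consequence of irreducibility that the normalized products $A_{\boldsymbol{\sigma}}$ form a \emph{bounded} family, which is equivalent to the existence of at least one sub-invariant body, i.e.\ a centrally symmetric body $M_{0}$ with $T(M_{0})\subseteq M_{0}$.

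Granting such an $M_{0}$, I would apply Zorn's lemma to the sub-invariant bodies contained in $M_{0}$, ordered by inclusion: the lower bound of a decreasing chain $\{M_{\alpha}\}$ is its intersection, which is again sub-invariant because monotonicity gives $T(\bigcap_{\alpha} M_{\alpha})\subseteq\bigcap_{\alpha}T(M_{\alpha})\subseteq\bigcap_{\alpha}M_{\alpha}$. This yields a minimal sub-invariant body $M_{\ast}$, and irreducibility enters twice to finish. First, $M_{\ast}$ cannot collapse to a lower-dimensional set: from $A_iM_{\ast}\subseteq T(M_{\ast})\subseteq M_{\ast}$ the subspace $\operatorname{span}(M_{\ast})$ is invariant under every $A_i$, so by irreducibility it is all of $\mathbb{R}^{d}$ and $M_{\ast}$ is a genuine body. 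Second, minimality must upgrade $T(M_{\ast})\subseteq M_{\ast}$ to equality $T(M_{\ast})=M_{\ast}$: if the inclusion were strict one should be able to shrink $M_{\ast}$ and remain sub-invariant, contradicting minimality. Making this last dichotomy rigorous — excluding strict sub-invariance of the minimal body, and, before that, deriving boundedness of the normalized products from irreducibility via a projective compactness argument — is where essentially all the work lies, and it is the step I expect to be the main obstacle.

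For the ``moreover'' clause, suppose $M$ is centrally symmetric and satisfies $\rho M=\conv(\bigcup_i A_iM)$ for some $\rho>0$. Being a centrally symmetric body, $M$ is the unit ball of a norm $\|\cdot\|_{M}$, and I would compute $\rho_{n}(\setA)$ in this reference norm, which is legitimate because $\rho(\setA)$ is norm-independent. Iterating the defining identity gives $\rho^{n}M=T^{n}(M)=\conv\bigl(\bigcup_{|\boldsymbol{\sigma}|=n}A_{\boldsymbol{\sigma}}M\bigr)$. Since $A_{\boldsymbol{\sigma}}M\subseteq\rho^{n}M$ for every word, one reads off $\|A_{\boldsymbol{\sigma}}\|_{M}\le\rho^{n}$, hence $\rho_{n}(\setA)\le\rho$. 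For the reverse inequality I would invoke Milman's theorem: every extreme point of the compact body $\rho^{n}M$ lies in the union $\bigcup_{|\boldsymbol{\sigma}|=n}A_{\boldsymbol{\sigma}}M$. Choosing an extreme point $p$, which satisfies $\|p\|_{M}=\rho^{n}$, and writing $p=A_{\boldsymbol{\sigma}}v$ with $\|v\|_{M}\le1$ yields $\|A_{\boldsymbol{\sigma}}\|_{M}\ge\rho^{n}$ for at least one word of each length $n$, whence $\rho_{n}(\setA)=\rho$ for every $n$, and letting $n\to\infty$ gives $\rho(\setA)=\rho$. The only subtlety is that $M$ must be a genuine body — bounded with nonempty interior — so that $\|\cdot\|_{M}$ is an honest norm; central symmetry is precisely what guarantees that this Minkowski gauge is a norm and not merely a gauge, and this clause therefore needs no appeal to irreducibility.
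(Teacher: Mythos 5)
Your proposal for the \emph{existence} half takes a genuinely different route from the paper's, and as written it has a real gap. The paper does not prove existence from scratch: it derives Theorem~\ref{T:DK} from Barabanov's Theorem~\ref{T:Bar} by applying the latter to the transposed family $\setA^{T}$, taking the unit ball $S$ of the resulting \textit{B}-norm and setting $M=S^{\circ}$; the polar calculus (properties (iv), (v), (vii)) turns $S=\rho\bigcap_{i}(A_{i}^{T})^{-1}S$ into $\rho M=\conv\left(\bigcup_{i}A_{i}M\right)$. Your direct fixed-point argument on $T(M)=\conv\left(\bigcup_{i}A_{i}M\right)$ is closer in spirit to Protasov's original proof, but the Zorn step fails as stated: for any sub-invariant body $M_{0}$ the sets $\lambda M_{0}$, $0<\lambda\le1$, form a decreasing chain of sub-invariant bodies whose intersection is $\{0\}$, so this chain has no lower bound in the poset of \emph{bodies}, while enlarging the poset to all sub-invariant compact convex symmetric sets makes $\{0\}$ the minimum and the conclusion empty. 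One must first normalize (e.g.\ fix $e\neq0$ on the boundary, as in the paper's calibration steps), then use Blaschke compactness together with a non-collapse estimate coming from irreducibility; that, plus the boundedness of the normalized products which you correctly defer (itself equivalent to the existence of an extremal norm), is essentially the entire content of the theorem. So the existence half should be counted as containing a genuine gap, not merely postponed routine work.

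Your proof of the ``moreover'' clause, by contrast, is correct, complete modulo Milman's theorem, and more self-contained than the paper's, which obtains $\rho=\rho(\setA)$ only through the asserted equivalence with Barabanov's theorem (a \textit{DK}-body yields a \textit{B}-norm for $\setA^{T}$ with the same $\rho$, and Theorem~\ref{T:Bar} then identifies $\rho$ with $\rho(\setA^{T})=\rho(\setA)$). Iterating the defining identity to $\rho^{n}M=\conv\bigl(\bigcup_{|\boldsymbol{\sigma}|=n}A_{\boldsymbol{\sigma}}M\bigr)$, reading off $\rho_{n}(\setA)\le\rho$ from the inclusion $A_{\boldsymbol{\sigma}}M\subseteq\rho^{n}M$ and $\rho_{n}(\setA)\ge\rho$ from the fact that an extreme point of $\rho^{n}M$ must lie in some $A_{\boldsymbol{\sigma}}M$, is a clean argument; the one point to state explicitly is that $M$ must be \emph{bounded} for its Minkowski gauge to be a norm, since the paper's footnote defines a body only as a set with nonempty interior.
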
 

A body~$M$ satisfying~\eqref{eq:DKbody} for some $\rho$ is called a 
\emph{Dranishnikov-Konyagin body} (hereinafter the name 
\emph{\textit{DK}-body} will be also used). One of the first complete proofs 
of Theorem~\ref{T:DK} was given in~\cite{Prot:FPM96:e}. An algorithm for 
constructing \textit{DK}-bodies was also proposed there, but it was not 
widely used in the literature. 

Each centrally symmetric body~$M$ can be treated as a unit ball of some norm, 
called the Minkowski norm of the corresponding body. For a 
\textit{DK}-body~$M$ this norm is defined by the equality 
\begin{equation}\label{eq:dknorm} 
  \|x\|_{dk}:=\min\left\{t: t\ge0,~ x\in tM\right\}. 
\end{equation} 
The norm $\|\cdot\|_{dk}$ was first, apparently, studied in detail 
in~\cite{Prot:FPM96:e}. Since then it has been called the 
\emph{Dranishnikov--Konyagin--Protasov norm} (hereinafter referred to as the 
\emph{\textit{DKP}-norm}). 

Each \textit{DKP}-norm, as well as each \textit{B}-norm, is a so-called 
\emph{extremal norm} corresponding to a set of matrices $\setA$, i.e.\ a norm 
satisfying 
\begin{equation}\label{eq:extnorm} 
  \|A_{i}x\|\le\rho(\setA)\|x\|, 
  \qquad\forall~A_{i}\in\setA,~\forall~x\in\mathbb{R}^{d} 
\end{equation} 
or, equivalently, 
\[ 
  \max_{i}\|A_{i}x\|\le \rho(\setA)\|x\|, \qquad\forall~x\in\mathbb{R}^{d}. 
\] 
The term ``extremal norm'' first appeared, apparently, in~\cite{Bar:ACC95}; a 
list of further works devoted to the study of extremal norms can be found 
in~\cite{Koz:IITP13}. 

With such a powerful tool as the 
\href{https://gitlab.com/tommsch/ttoolboxes}{t-toolboxs} extension package 
for MATLAB described in~\cite{Mejstrik:ACMTMS20, MejReif:LAA25}, further 
attempts to develop algorithms for finding the generalized/joint spectral 
radius would seem pointless. Note, however, that the practical application of 
the \texttt{t-toolboxs} package has a number of limitations: firstly, this 
package is mainly focused on calculating the generalized/joint spectral 
radius, and constructing extremal norms with it, in particular 
\textit{B}-norms, requires some additional effort; and secondly, this package 
is quite large ($\sim15$ Mb) and is intended for use in the (commercial=paid) 
MATLAB program. In this regard, the issue of developing a geometric approach 
(preferably simple in algorithmic terms) to the problem of estimating the 
growth rate of matrix products based on finding the \textit{B}-norm remains 
relevant. 

The aim of this paper is to try to combine the advantages of different 
approaches based on the concept of extremality in order to obtain results 
that are easier to use in everyday life. The structure of the paper is as 
follows: in the introduction (Section~\ref{S:intro}) we tried to justify the 
aim of this paper. The central part of the paper is occupied by 
Section~\ref{S:tBDK}, which establishes the equivalence of the Barabanov and 
Dranishnikov--Konyagin theorems and shows how the Dranishnikov--Konyagin 
theorem can be used to construct Barabanov norms. Section~\ref{S:DK} 
describes the algorithm for constructing Dranishnikov--Konyagin bodies and 
presents the idea of its proof. Finally, Section~\ref{S:extnorms} describes 
additional techniques that allow us to simplify the construction of Barabanov 
norms under the assumption that we initially know some extremal norm. 

\section{Theorems of Barabanov and Dranishnikov--Konyagin}\label{S:tBDK} 

Extremal norms, and even more so \textit{B}-norms, can be found explicitly 
only in rare cases. Nevertheless, such norms exist under fairly general 
assumptions! For example, Theorem~\ref{T:Bar} implies that for irreducible 
matrix sets a \textit{B}-norm always exists, since for such matrix sets there 
always exist limits~\eqref{eq:JSR} and~\eqref{eq:GSR}, which define the joint 
and generalized spectral radius, respectively. And Theorem~\ref{T:DK} implies 
the existence of a \textit{DK}-body, which can be treated as a unit ball of 
the extremal norm~\eqref{eq:dknorm}. 

\begin{remark}\label{rem:maxrate} 
  In cases where there is an extremal norm, for each $x\in\mathbb{R}^{d}$ and 
  each finite sequence of indices $\boldsymbol{\sigma}= 
  \{\sigma_{1},\sigma_{2},\ldots,\sigma_{n}\}\in{\{1,\ldots,m\}}^{n}$, 
  by~\eqref{eq:extnorm}, the following inequalities hold: 
  \[ 
    \|A_{\sigma_{n}}\cdots A_{\sigma_{2}}A_{\sigma_{1}}x\|\le 
    {\rho({\setA})}^{n}\|x\|\quad\Longrightarrow\quad \|A_{\sigma_{n}}\cdots 
    A_{\sigma_{2}}A_{\sigma_{1}}\|\le {\rho({\setA})}^{n}, 
  \] 
  giving an estimate of the maximum growth rate of matrix products with 
  factors from~$\setA$. 
\end{remark} 
\begin{remark}\label{rem:maxrate-bar} 
  In those cases where the conditions of Theorem~\ref{T:Bar} are satisfied, 
  i.e.\ there exists a \textit{B}-norm $\|\cdot\|$, for each 
  $x\in\mathbb{R}^{d}$ there exists an infinite sequence of indices 
  $\boldsymbol{\sigma}= \{\sigma_{1},\sigma_{2},\ldots\}$ such that for each 
  $n$ in the \textit{B}-norm $\|\cdot\|$ the equality 
  \[ 
    \|A_{\sigma_{n}}\cdots A_{\sigma_{2}}A_{\sigma_{1}}x\|= 
    {\rho({\setA})}^{n}\|x\| 
  \] 
  holds which allows one to ``explicitly'' construct a sequence of matrices 
  with the maximum growth rate in the \textit{B}-norm $\|\cdot\|$. Moreover, 
  relation~\eqref{eq:mane-bar} allows us to construct for any initial 
  condition $x_{0}$ individual (fastest growing) trajectories of the form 
  \[ 
    x_{n+1}=A_{\sigma_{n}}x_{n},\qquad 
    \sigma_{n}\in\{1,2,\ldots,d\},~n=0,1,\ldots\,, 
  \] 
  satisfying the relation 
  \[ 
    \|x_{n+1}\|=\rho(\setA)\|x_{n}\|\quad\Longrightarrow\quad \|x_{n}\|= 
    \rho(\setA)^{n}\|x_{0}\|. 
  \] 
\end{remark} 

As noted above, extremal norms and, in particular, \textit{B}-norms or 
\textit{DKP}-norms are difficult to find explicitly, even despite the 
explicit form of the expressions defining them~\eqref{eq:mane-bar} 
and~\eqref{eq:DKbody}. In addition, although the concepts of all these norms 
are close to each other, they are still different. At the same time, the 
concept of the Barabanov norm is more informative in terms of applications 
(see Remark~\ref{rem:maxrate-bar}), but is also more difficult to verify, 
while the concept of an extremal norm, although less informative, is easier 
to verify. In this regard, at least the following questions arise: 
\begin{enumerate} 
\item If some extremal norm is known, does this somehow simplify the search 
    for \textit{B}-norms or \textit{DKP}-norms? 
\item Can knowledge of a \textit{DKP}-norm make it easier to construct a 
    \textit{B}-norm, and vice versa? 
\item Are there any situations in which extremal norms can be found in some 
    reasonable sense ``simply''? 
\end{enumerate} 

The remainder of this section and Section~\ref{S:DK} are devoted to partial 
answers to the first two questions. Some situations related to the third 
question will be considered in Section~\ref{S:extnorms}. Finally, 
Appendix~\ref{A:PyCode} contains modified Python code for finding 
\textit{B}-norms and \textit{DKP}-norms. 

In~\cite{PW:LAA08} the proof of Theorem~\ref{T:DK} used the technique of 
\emph{dual norms}, see, for example,~\cite[Sect.~5.4, 5.5]{HJ2:e}. It will be 
more convenient for us to use another, conceptually close, but technically 
somewhat different derivation of the proof of Theorem~\ref{T:DK} from 
Theorem~\ref{T:Bar}, using the technique of polars from the theory of duality 
of vector spaces, see, for example,~\cite[Ch.~II]{RobRob:e}. 

\subsection{An equivalent formulation of Barabanov's theorem} 
Denote by $S=\{x:\|x\|\le1\}$ the unit ball in the \textit{B}-norm 
$\|\cdot\|$. Then, in the case where all matrices $A_{i}\in\setA$ are 
non-singular, identity~\eqref{eq:mane-bar} can be rewritten in an equivalent 
form in terms of the set $S$: 
\begin{equation}\label{eq:BarS} 
  S=\rho\,\bigcap_{i}A_{i}^{-1}S. 
\end{equation} 
Therefore, in this case, Theorem~\ref{T:Bar} admits an equivalent 
formulation: 
\begin{theorem}\label{T:Bar-alt} 
  Let the set of non-singular matrices $\setA=\{A_{1},\ldots,A_{m}\}$ be 
  irreducible. Then the number~$\rho$ is a joint $($generalized$)$ spectral 
  radius of~$\setA$ if and only if there exists a centrally symmetric convex 
  body $S$ satisfying equality~\eqref{eq:BarS}. 
\end{theorem}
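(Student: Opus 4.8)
The plan is to deduce Theorem~\ref{T:Bar-alt} directly from Theorem~\ref{T:Bar} by exploiting the standard bijection between norms on $\mathbb{R}^{d}$ and their unit balls. Recall that the unit balls of norms are precisely the bounded, centrally symmetric, convex bodies, with the norm recovered from its ball $S$ as the Minkowski functional $\|x\|=\min\{t\ge0:x\in tS\}$ (cf.~\eqref{eq:dknorm}). Under this correspondence the existence of a \textit{B}-norm satisfying~\eqref{eq:mane-bar} is the same as the existence of a centrally symmetric convex body $S$ whose Minkowski functional satisfies~\eqref{eq:mane-bar}, so the whole theorem will follow once I show that, for such a body, the identity~\eqref{eq:mane-bar} is equivalent to the set identity~\eqref{eq:BarS}.

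To establish that equivalence, I would introduce the auxiliary function $N(x)=\max_{i}\|A_{i}x\|$. Since every $A_{i}$ is non-singular and $\|\cdot\|$ is a norm, each map $x\mapsto\|A_{i}x\|$ is a norm, and hence so is their pointwise maximum $N$. The key computation is that the unit ball of $N$ is
\[
  \{x:N(x)\le1\}=\bigcap_{i}\{x:\|A_{i}x\|\le1\}=\bigcap_{i}A_{i}^{-1}S,
\]
where non-singularity is used to pass $\|A_{i}x\|\le1\iff A_{i}x\in S\iff x\in A_{i}^{-1}S$. On the other hand, the unit ball of the norm $\rho\|\cdot\|$ is $\rho^{-1}S$. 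Two norms coincide if and only if their unit balls coincide, so the Barabanov identity $N\equiv\rho\|\cdot\|$, i.e.~\eqref{eq:mane-bar}, holds if and only if $\bigcap_{i}A_{i}^{-1}S=\rho^{-1}S$, which is exactly~\eqref{eq:BarS} after multiplying through by $\rho$.

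With this equivalence in hand, the theorem is immediate. For the forward implication, if $\rho=\rho(\setA)$ then Theorem~\ref{T:Bar} (applicable by irreducibility) supplies a norm satisfying~\eqref{eq:mane-bar}; its unit ball is a centrally symmetric convex body satisfying~\eqref{eq:BarS}. For the converse, given a centrally symmetric convex body $S$ satisfying~\eqref{eq:BarS}, its Minkowski functional is a norm satisfying~\eqref{eq:mane-bar}, so Theorem~\ref{T:Bar} forces $\rho=\rho(\setA)$.

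The only genuinely delicate point, and the step I expect to need the most care, is confirming that the set $S$ in~\eqref{eq:BarS} really is the unit ball of a norm — in particular that it is bounded, so that its Minkowski functional is positive definite. Boundedness is automatic when $S$ is produced from a norm (the forward direction), but in the converse direction one must either argue it from~\eqref{eq:BarS} itself or build it into the meaning of \emph{centrally symmetric convex body}; the remaining properties, convexity and central symmetry, are preserved under the operations $A_{i}^{-1}(\cdot)$, intersection, and scaling, and so are consistent with~\eqref{eq:BarS}. The role of the non-singularity hypothesis is precisely to make each $A_{i}^{-1}S$ a genuine body and to keep $N$ positive definite; without it the reformulation breaks down.
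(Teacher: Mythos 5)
Your proof is correct and takes essentially the same route as the paper, which simply asserts that \eqref{eq:mane-bar} can be rewritten as \eqref{eq:BarS} via the norm/unit-ball correspondence and then invokes Theorem~\ref{T:Bar}; you have supplied the details (the identification of $\bigcap_{i}A_{i}^{-1}S$ as the unit ball of $x\mapsto\max_{i}\|A_{i}x\|$) that the paper leaves implicit. The boundedness caveat you flag for the converse direction is a genuine point the paper also glosses over, and your handling of it is appropriate.
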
 

The given reformulation of the Barabanov Theorem~\ref{T:Bar} is close to the 
formulation of the Dranishnikov--Konyagin Theorem~\ref{T:DK}. At the same 
time, it turns out to be more suitable for software implementation of 
algorithms for finding the \textit{B}-norm. For a more detailed comment, see 
the following Remark~\ref{rem:other2}. 

\subsection{Equivalence of the Barabanov and Dranishnikov--Konyagin 
theorems}\label{SS:BeqDK} Let us recall the necessary concepts, restricting 
ourselves to the case of finite-dimensional spaces. A \emph{polar} of a set 
$X\subset\mathbb{R}^{d}$ is the set $X^{\circ}$ of all $x'\in\mathbb{R}^{d}$ 
for which 
\[ 
  \sup\{|\langle x,x'\rangle|: x\in X\}\le 1, 
\] 
where $\langle x,x'\rangle$ denotes the bilinear form 
\[ 
  \langle x,x'\rangle=x_{1}x'_{1}+\cdots+x_{m}x'_{m},\qquad 
  x,x'\in\mathbb{R}^{d}. 
\] 

Polars of sets from $\mathbb{R}^{d}$ have the following 
properties~\cite{RobRob:e}: 
\begin{enumerate}[(i)]
\item the set $X^{\circ}$ is absolutely convex\footnote{The
    \emph{absolutely convex hull} of a set $X$ is the absolutely convex 
    symmetric closure of the set $X$, that is, the set $\absco X:= \{tx+sy: 
    \forall~x,y\in X,~|t|+|s|\le1\}:= \conv\{\{0\}\cup X\cup (-X)\}$. The 
    set $X$ is called absolutely convex if it coincides with its absolutely 
    convex hull: $X=\absco X$.} and closed; 
\item $X^{\circ}=(\absco X)^{\circ}$ and
    $X^{\circ\circ}:=(X^{\circ})^{\circ} =\absco X$; 
\item if $X\subseteq Y$, then $Y^{\circ}\subseteq X^{\circ}$;
\item if $\lambda\neq0$, then $(\lambda X)^{\circ}=\lambda^{-1}X^{\circ}$; 
\item if $A$ is a non-degenerate linear mapping from $\mathbb{R}^{d}$ to 
    $\mathbb{R}^{d}$ ($d\times d$ matrix), then $(A 
    X)^{\circ}=(A^{T})^{-1}X^{\circ}$; 
\item if $\{X_{i}\}$ is a finite collection of sets, then
    $\left(\bigcup_{i} X_{i}\right)^{\circ}=\bigcap_{i}X_{i}^{\circ}$; 
\item if $\{X_{i}\}$ is a finite collection of sets, then
    $\left(\bigcap_{i}X_{i}\right)^{\circ}=\absco\left(\bigcup_{i}X_{i}^{\circ}\right)$. 
\end{enumerate} 

Now we can show that the Barabanov and Dranishnikov-Konyagin theorems are in 
a certain sense equivalent to each other. 

Let the conditions of Theorem~\ref{T:Bar} be satisfied for an irreducible set 
of matrices $\setA=\{A_{1},\ldots,A_{m}\}$. In this case, the number $\rho$ 
is the joint and, hence, the generalized spectral radius of the set of 
matrices~$\setA$. Then, the same number $\rho$ is the generalized and, hence, 
the joint spectral radius of the (irreducible) set of matrices 
$\setA^{T}=\{A^{T}_{1},\ldots,A^{T}_{m}\}$. In this case, by 
Theorem~\ref{T:Bar}, there exists a norm~$\|\cdot\|$ in~${\mathbb{R}}^{d}$ in 
which identity~\eqref{eq:mane-bar} holds. 

Now let us set $M=S^{\circ}$. Then, taking the polars of both parts of 
equality~\eqref{eq:BarS}, we obtain: 
\begin{align*} 
  M=S^{\circ}&= 
  \left(\rho\bigcap_{i}(A^{T}_{i})^{-1}S\right)^{\circ}\stackrel{(1)}{=} 
  \rho^{-1}\left(\bigcap_{i}(A^{T}_{i})^{-1}S\right)^{\circ}
  \\&\stackrel{(2)}{=}\rho^{-1}\absco\left(\bigcup_{i}
    \left((A^{T}_{i})^{-1}S\right)^{\circ}\right)\stackrel{(3)}{=} 
  \rho^{-1}\absco\left(\bigcup_{i}A_{i}S^{\circ}\right)
  \\&=\rho^{-1}\absco\left(\bigcup_{i}A_{i}M\right). 
\end{align*} 
Here equalities (1), (2) and (3) follow from properties (iv), (vii) and (v) 
of polars, respectively, and the remaining equalities follow from the 
definition of the set~$M$. Since polars of any sets are centrally symmetric 
bodies, it follows from the obtained equalities that \[ 
M=\rho^{-1}\absco\left(\bigcup_{i}A_{i}M\right)= 
\rho^{-1}\conv\left(\bigcup_{i}A_{i}M\right), 
\] 
and therefore~$M$ is a \textit{DK}-body for the set of matrices $\setA$. 

Similar calculations show that Theorem~\ref{T:DK} implies 
Theorem~\ref{T:Bar}. 

\begin{remark}\label{rem:alert} 
  In the above reasoning, the non-degeneracy of the matrices $A_{i}$ was 
  implicitly assumed, since in a number of places the matrices $A^{-1}_{i}$ 
  and $(A^{T})^{-1}_{i}$ appeared. We omit the (somewhat more cumbersome) 
  calculations showing that in this case the proposed scheme of reasoning 
  remains valid. 
\end{remark} 

\section{Construction of Dranishnikov--Konyagin bodies}\label{S:DK} 
As in the case of Barabanov's theorem, Dranishnikov--Konyagin's theorem does 
not provide any constructive information on how to find the corresponding 
\textit{DK}-body~$M$. At the same time, there are a number of algorithms for 
computing \textit{B}-norms. In this section, we use the idea of the 
max-relaxation algorithm from~\cite{Koz:DCDSB10, Koz:ArXiv10:1}, presented in 
the next subsection, to iteratively construct the \textit{DK}-body~$M$. 

\subsection{Max-relaxation algorithm for constructing Barabanov 
norms}\label{S:alrMR} A continuous function $\gamma(t,s)$, $t,s> 0$, with the 
properties
\[
\gamma(t,t)=t,\qquad 
\min\{t,s\}<\gamma(t,s)<\max\{t,s\}\quad\text{for}\quad t\neq s, 
\]
will be called an averaging function in what follows. Examples of averaging 
functions are the functions $\gamma(t,s)=\frac{t+s}{2}$, 
$\gamma(t,s)=\sqrt{ts}$, $\gamma(t,s)=\frac{2ts}{t+s}$. 

Let $\|\cdot\|_{0}$ and an arbitrary element $e\neq0$ such that $\|e\|_{0}=1$ 
be given in ${\mathbb{K}}^{m}$, and let $\gamma(\cdot,\cdot)$ be an averaging 
function. We construct recursively a sequence of norms $\|\cdot\|_{n}$ 
according to the following rules: 

MR$_{1}$: assuming that the norm $\|\cdot\|_{n}$ is already known, we 
calculate the values 
\begin{equation}\label{eq:rho} 
  \rho^{+}_{n}=\max_{x\neq0}\frac{\max\limits_{i}\|A_{i}x\|_{n}}{\|x\|_{n}},\quad 
  \rho^{-}_{n}=\min_{x\neq0}\frac{\max\limits_{i}\|A_{i}x\|_{n}}{\|x\|_{n}}; 
\end{equation} 

MR$_{2}$: we set $\gamma_{n}=\gamma(\rho^{-}_{n},\rho^{+}_{n})$ and define a 
new norm: 
\begin{equation}\label{eq:max-relax} 
  \|x\|_{n+1}= 
  \max\left\{\|x\|_{n},~\gamma^{-1}_{n}\max_{i}\|A_{i}x\|_{n}\right\}, 
\end{equation} 
after which we calibrate the norm $\|\cdot\|_{n+1}$ by setting 
\begin{equation}\label{eq:calibr} 
  \|x\|^{\bullet}_{n+1}=\|x\|_{n+1}/\|e\|_{n+1}. 
\end{equation} 

\begin{theorem}[see~\cite{Koz:DAN09:e, Koz:DCDSB10, 
Koz:ArXiv10:1}]\label{th:1} For any irreducible set of matrices~$\setA$ and 
any averaging function $\gamma(t,s)$, the sequences $\{\rho^{\pm}_{n}\}$ 
defined by the iterative procedure MR$_{1}$, MR$_{2}$ converge to 
$\rho(\setA)$, and the sequence of norms $\|\cdot\|^{\bullet}_{n}$ uniformly 
on each bounded set converges to some \textit{B}-norm $\|\cdot\|^{*}$ of the 
set of matrices~$\setA$. Moreover, the sequence $\{\rho^{-}_{n}\}$ does not 
decrease, and the sequence $\{\rho^{+}_{n}\}$ does not increase and 
$\rho^{-}_{n}\le \rho(\setA)\le \rho^{+}_{n}$ for $n=1,2,\ldotsb$, which 
provides an a posteriori estimate of the error in calculating $\rho(\setA)$. 
\end{theorem}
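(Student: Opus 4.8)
The plan is to separate the two assertions: the monotone a~posteriori bracketing $\rho^{-}_{n}\le\rho(\setA)\le\rho^{+}_{n}$, which I would settle by elementary estimates, and the convergence to a \textit{B}-norm, which is the genuinely hard part. First I would establish, for an arbitrary norm $\|\cdot\|$ with the quantities $\rho^{\pm}$ of \eqref{eq:rho}, the bound $\rho^{-}\le\rho(\setA)\le\rho^{+}$. The upper estimate is immediate: $\max_{i}\|A_{i}x\|\le\rho^{+}\|x\|$ means $\|A_{i}\|\le\rho^{+}$ in the induced operator norm, so $\|A_{\boldsymbol{\sigma}}\|\le(\rho^{+})^{n}$ and $\rho_{n}(\setA)\le\rho^{+}$; letting $n\to\infty$ gives $\rho(\setA)\le\rho^{+}$. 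For the lower estimate, $\rho^{-}$ guarantees that every $x$ admits an index $i$ with $\|A_{i}x\|\ge\rho^{-}\|x\|$; iterating greedily produces, for each $N$, a word $\boldsymbol{\sigma}$ with $\|A_{\boldsymbol{\sigma}}\|\ge(\rho^{-})^{N}$, whence $\rho_{N}(\setA)\ge\rho^{-}$ for every $N$, and since $\rho(\setA)=\inf_{N}\rho_{N}(\setA)$ by submultiplicativity (Fekete), $\rho(\setA)\ge\rho^{-}$. Applied to $\|\cdot\|_{n}$ this yields $\rho^{-}_{n}\le\rho(\setA)\le\rho^{+}_{n}$ for all $n$.

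Next I would prove the monotonicity $\rho^{+}_{n+1}\le\rho^{+}_{n}$ and $\rho^{-}_{n+1}\ge\rho^{-}_{n}$ directly from the update \eqref{eq:max-relax}; calibration only rescales and leaves the ratios \eqref{eq:rho} unchanged, so it may be ignored here. Writing $f_{n}(x)=\max_{i}\|A_{i}x\|_{n}$, the definitions give $\rho^{-}_{n}\|x\|_{n}\le f_{n}(x)\le\rho^{+}_{n}\|x\|_{n}$, and the averaging property gives $\rho^{-}_{n}\le\gamma_{n}\le\rho^{+}_{n}$; consequently $\|x\|_{n}\le\|x\|_{n+1}\le(\rho^{+}_{n}/\gamma_{n})\|x\|_{n}$. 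For the upper bound one estimates $\|A_{i}x\|_{n+1}=\max\{\|A_{i}x\|_{n},\gamma_{n}^{-1}f_{n}(A_{i}x)\}$ term by term, using $\|A_{i}x\|_{n}\le f_{n}(x)\le\gamma_{n}\|x\|_{n+1}$ and $f_{n}(A_{i}x)\le\rho^{+}_{n}\|A_{i}x\|_{n}$, to obtain $f_{n+1}(x)\le\rho^{+}_{n}\|x\|_{n+1}$, i.e.\ $\rho^{+}_{n+1}\le\rho^{+}_{n}$. For the lower bound one uses $f_{n+1}(x)\ge f_{n}(x)\ge\rho^{-}_{n}\|x\|_{n}$ together with $\rho^{-}_{n}\gamma_{n}^{-1}\le1$ to get $f_{n+1}(x)\ge\rho^{-}_{n}\|x\|_{n+1}$, i.e.\ $\rho^{-}_{n+1}\ge\rho^{-}_{n}$. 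Thus $\rho^{+}_{n}\downarrow\rho^{*}_{+}$ and $\rho^{-}_{n}\uparrow\rho^{*}_{-}$ with $\rho^{*}_{-}\le\rho(\setA)\le\rho^{*}_{+}$, and it remains to prove $\rho^{*}_{-}=\rho^{*}_{+}$ together with convergence of the norms.

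This is where irreducibility enters and where the main difficulty lies. I would first use irreducibility to show that the calibrated norms $\|\cdot\|^{\bullet}_{n}$ are uniformly equivalent, $a\|x\|_{0}\le\|x\|^{\bullet}_{n}\le b\|x\|_{0}$ with $a,b$ independent of $n$; the non-degeneracy lower bound $a>0$ is exactly the point that would fail for a reducible set, since a common invariant subspace would let the norm collapse there. Uniform equivalence places the unit balls in a family to which the Blaschke selection theorem applies, so some subsequence $\|\cdot\|^{\bullet}_{n_{k}}$ converges uniformly on bounded sets to a norm $\|\cdot\|^{*}$. Since the maps $\|\cdot\|\mapsto\rho^{\pm}$, the averaging $\gamma$, and the relaxation step are continuous under this convergence, $\|\cdot\|^{*}$ must be invariant under the calibrated relaxation step with parameter $\gamma^{*}=\gamma(\rho^{*}_{-},\rho^{*}_{+})$, i.e.\ $\max\{\|x\|^{*},(\gamma^{*})^{-1}\max_{i}\|A_{i}x\|^{*}\}=c\|x\|^{*}$ for some $c\ge1$ and all $x$. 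A short case analysis on this fixed-point identity — using only $\gamma(t,t)=t$ and the strict averaging inequalities — forces $c=1$ and $\rho^{*}_{-}=\rho^{*}_{+}=\gamma^{*}$, hence $\max_{i}\|A_{i}x\|^{*}\equiv\gamma^{*}\|x\|^{*}$: the limit is a \textit{B}-norm. Continuity gives $\rho^{*}_{\pm}=\rho^{\pm}(\|\cdot\|^{*})$, so $\rho^{*}_{-}=\rho^{*}_{+}$, and the bracketing of the previous paragraph pins the common value to $\rho(\setA)$.

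The last, and subtlest, point is to upgrade subsequential convergence to convergence of the whole sequence $\|\cdot\|^{\bullet}_{n}$ to a single \textit{B}-norm $\|\cdot\|^{*}$; I expect this, rather than the fixed-point computation, to be the real obstacle. With $\rho^{*}_{-}=\rho^{*}_{+}$ in hand we have $\rho^{+}_{n}/\gamma_{n}\to1$ along the whole sequence, so the relaxation step becomes asymptotically inactive; I would exploit this together with the monotone (nested) behaviour of the underlying balls $\{\|x\|_{n}\le1\}$ to show the limit shape is unique, so that the full sequence converges and $\|\cdot\|^{\bullet}_{n}\to\|\cdot\|^{*}$ uniformly on bounded sets. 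The references \cite{Koz:DAN09:e,Koz:DCDSB10,Koz:ArXiv10:1} carry out this compactness-and-fixed-point analysis in full.
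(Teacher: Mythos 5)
You should first note that the paper itself does not prove Theorem~\ref{th:1}: it is quoted from \cite{Koz:DAN09:e, Koz:DCDSB10, Koz:ArXiv10:1} and used as a black box (in particular in the derivation of Theorem~\ref{th:2} by polarity), so there is no in-paper proof to compare against. Judged on its own, your outline follows the same architecture as those references, and its elementary half is complete and correct: the bracketing $\rho^{-}_{n}\le\rho(\setA)\le\rho^{+}_{n}$ via the induced operator norm on one side and the greedy trajectory plus Fekete's lemma on the other, and the monotonicity of $\{\rho^{\pm}_{n}\}$ by direct estimation of \eqref{eq:max-relax} using $\rho^{-}_{n}\le\gamma_{n}\le\rho^{+}_{n}$, are both sound; you are also right that calibration leaves the ratios in \eqref{eq:rho} unchanged.

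The convergence half, however, has genuine holes beyond the one you flag. First, the uniform two-sided equivalence $a\|x\|_{0}\le\|x\|^{\bullet}_{n}\le b\|x\|_{0}$ is precisely where all the work specific to irreducibility lives; saying that irreducibility ``shows'' it names the needed lemma rather than proving it. Second, the fixed-point identity at a subsequential limit is not justified as stated: Blaschke/Arzel\`a--Ascoli gives convergence of $\|\cdot\|^{\bullet}_{n_{k}}$, but the relaxation step relates index $n_{k}$ to $n_{k}+1$, which need not lie in the chosen subsequence, so without a further extraction and an argument that consecutive calibrated norms become asymptotically equal (which itself essentially presupposes $\rho^{+}_{n}/\gamma_{n}\to1$), the identity you analyze should read $\max\bigl\{\|x\|^{*},(\gamma^{*})^{-1}\max_{i}\|A_{i}x\|^{*}\bigr\}=c\,\|x\|^{**}$ with a possibly different limit on the right, and your case analysis no longer closes. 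Third, the upgrade from subsequential to full-sequence convergence is deferred entirely to the literature. Since the paper itself only cites this theorem, leaning on \cite{Koz:DCDSB10, Koz:ArXiv10:1} for these points is defensible, but as a self-contained argument your proposal establishes only the monotone a~posteriori bracketing, not the convergence to a \textit{B}-norm.
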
 

\subsection{Construction of Dranishnikov-Konyagin bodies}\label{S:DKalg} 
By analogy with the max-relaxation algorithm for \textit{B}-norms, we propose 
the following algorithm for constructing \textit{DK}-bodies, which is a 
simple generalization of the max-relaxation algorithm from~\cite{Koz:DCDSB10, 
Koz:ArXiv10:1}. The key idea in this algorithm will be the remark 
from~\cite{PWB:CDC05, PW:LAA08} on the duality of \textit{DK}-bodies and unit 
balls of \textit{B}-norms. 

Let some centrally symmetric convex body $M_{0}\subset{\mathbb{K}}^{m}$ and 
an arbitrary element $e\neq0$ be given, and let $\gamma(\cdot,\cdot)$ be an 
averaging function. Let us construct recursively a sequence of centrally 
symmetric bodies $M_{n}$ according to the following rules\footnote{CHR 
hereinafter stands for ``Convex Hull Relaxation''.}: 

CHR$_{1}$: assuming that the body $M_{n}$ is already known, we calculate the 
quantities 
\begin{align*} 
  \rho^{+}_{n} & =\min\left\{\rho: \conv\left(\bigcup_{i}A_{i}M_{n}\right) 
  \subseteq\rho M_{n}\right\}, \\ \rho^{-}_{n} & =\max\left\{\rho: \rho M_{n} 
  \subseteq\conv\left(\bigcup_{i}A_{i}M_{n}\right)\right\}; 
\end{align*} 

CHR$_{2}$: we set $\gamma_{n}=\gamma(\rho^{-}_{n},\rho^{+}_{n})$ and define a 
new centrally symmetric convex body: 
\begin{equation}\label{eq:conv-relax} 
  M_{n+1}= \conv\left\{M_{n},\gamma^{-1}_{n}\bigcup_{i}A_{i}M_{n}\right\}, 
\end{equation} 
after which we calibrate the body $M_{n+1}$ by setting 
\begin{equation}\label{eq:calibrM} 
  M^{\bullet}_{n+1}=\mu_{n+1}M_{n+1}, 
\end{equation} 
where $\mu_{n+1}$ is chosen such that the vector $e$ belongs to the boundary 
of the body~$M^{\bullet}_{n+1}$. 

In this case, the following statement will hold. 

\begin{theorem}\label{th:2} 
  For any irreducible set of matrices~$\setA$ and any averaging function 
  $\gamma(t,s)$, the sequences $\{\rho^{\pm}_{n}\}$ defined by the iterative 
  procedure CHR$_{1}$, CHR$_{2}$ converge to $\rho(\setA)$, and the sequence 
  of centrally symmetric convex bodies $M^{\bullet}_{n}$ converges (in the 
  Hausdorff metric) to some \textit{DK}-body~$M^{*}$ of the set of 
  matrices~$\setA$. Moreover, the sequence $\{\rho^{-}_{n}\}$ does not 
  decrease, and the sequence $\{\rho^{+}_{n}\}$ does not increase and 
  $\rho^{-}_{n}\le \rho(\setA)\le \rho^{+}_{n}$ for $n=1,2,\ldotsb$, which 
  provides an a posteriori estimate of the error in calculating 
  $\rho(\setA)$. 
\end{theorem}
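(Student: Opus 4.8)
The plan is to deduce Theorem~\ref{th:2} from the already-established Theorem~\ref{th:1} through the polar duality of Subsection~\ref{SS:BeqDK}, making precise the remark that \textit{DK}-bodies and unit balls of \textit{B}-norms are dual objects. Concretely, I would run the max-relaxation procedure MR$_1$, MR$_2$ not for $\setA$ but for the transposed family $\setA^{T}=\{A_{1}^{T},\ldots,A_{m}^{T}\}$, obtaining norms $\|\cdot\|_{n}$ with unit balls $S_{n}$, and then show that the polar bodies $M_{n}:=S_{n}^{\circ}$ are exactly the bodies produced by CHR$_1$, CHR$_2$ applied to $\setA$. This is legitimate because transposition preserves both irreducibility and the spectral radius, so $\rho(\setA^{T})=\rho(\setA)$ and Theorem~\ref{th:1} is available for $\setA^{T}$.

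First I would establish the dictionary between the two recursions. In terms of unit balls the max-relaxation step~\eqref{eq:max-relax} for $\setA^{T}$ reads $S_{n+1}=S_{n}\cap\bigcap_{i}\gamma_{n}(A_{i}^{T})^{-1}S_{n}$. Taking polars and using properties (iv), (v), (vii) of Subsection~\ref{SS:BeqDK} together with $\absco X=\conv X$ for centrally symmetric $X$, a short computation gives
\[
  S_{n+1}^{\circ}=\conv\Bigl\{S_{n}^{\circ},\ \gamma_{n}^{-1}\bigcup_{i}A_{i}S_{n}^{\circ}\Bigr\},
\]
which is precisely the convex-hull-relaxation step~\eqref{eq:conv-relax} for $\setA$ with $M_{n}=S_{n}^{\circ}$. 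The same polar manipulations, combined with the inclusion-reversing property (iii) and $X^{\circ\circ}=X$ for closed symmetric bodies, show that the two extremal inclusions defining $\rho_{n}^{\pm}$ in CHR$_1$ translate into the ratio quantities~\eqref{eq:rho} for $\setA^{T}$; hence the CHR quantities $\rho_{n}^{\pm}$ for $\setA$ coincide with the MR quantities for $\setA^{T}$, and consequently $\gamma_{n}$ is the same in both recursions.

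With the dictionary in hand, the conclusions of Theorem~\ref{th:2} follow from Theorem~\ref{th:1}. Since the sequences $\{\rho_{n}^{\pm}\}$ are literally identical in the two procedures, their monotonicity, the bracketing $\rho_{n}^{-}\le\rho(\setA)\le\rho_{n}^{+}$, and the convergence $\rho_{n}^{\pm}\to\rho(\setA^{T})=\rho(\setA)$ are inherited verbatim. For the bodies, Theorem~\ref{th:1} gives uniform convergence of the calibrated norms $\|\cdot\|_{n}^{\bullet}$ to a \textit{B}-norm of $\setA^{T}$; passing to unit balls this is Hausdorff convergence $S_{n}^{\bullet}\to S^{*}$, and applying the polar map yields $M_{n}^{\bullet}\to (S^{*})^{\circ}$ in the Hausdorff metric, a body which by the equivalence proved in Subsection~\ref{SS:BeqDK} is a \textit{DK}-body for $\setA$.

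The step I expect to be the main obstacle is reconciling the two calibration conventions and transporting the Hausdorff convergence through the polar map. The MR calibration~\eqref{eq:calibr} normalizes $\|e\|_{n+1}^{\bullet}=1$, whereas the CHR calibration~\eqref{eq:calibrM} forces $e\in\partial M_{n+1}^{\bullet}$; under polarity with a common $e$ these are different rescalings, so I would argue that the scale-invariant quantities $\rho_{n}^{\pm}$ are insensitive to the choice, and that on the compact family of symmetric bodies that are uniformly bounded and uniformly fat---a compactness guaranteed, as in Theorem~\ref{th:1}, by irreducibility---the polar map is a homeomorphism, so any admissible calibration yields the same limiting shape. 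A secondary caveat is that the dictionary above uses $A_{i}^{-1}$ and $(A_{i}^{T})^{-1}$; for singular matrices one handles the iteration by the limiting device already flagged in Remark~\ref{rem:alert}.
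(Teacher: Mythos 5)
Your proposal is correct and follows essentially the same route as the paper's own proof scheme: reduce CHR$_1$, CHR$_2$ to the max-relaxation procedure MR$_1$, MR$_2$ for $\setA^{T}$ via polars (using properties (iv), (v), (vii) to turn~\eqref{eq:conv-relax} into~\eqref{eq:Siter} and to identify the two sets of $\rho_{n}^{\pm}$), invoke Theorem~\ref{th:1}, and transport the convergence back through the polar map, with the same two caveats the paper itself flags, namely the mismatch between the calibrations~\eqref{eq:calibr} and~\eqref{eq:calibrM} and the non-degeneracy assumption hidden in $(A_{i}^{T})^{-1}$ (cf.\ Remarks~\ref{rem:other1} and~\ref{rem:other3}). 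The only cosmetic difference is the direction of the dictionary (you start from the MR balls for $\setA^{T}$ and polarize to get the CHR bodies, the paper polarizes the CHR bodies to get MR balls), which is immaterial since the correspondence is a step-by-step bijection once $M_{0}=S_{0}^{\circ}$ and the $\gamma_{n}$ are matched.
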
 

\begin{example}\label{ex2} 
  Consider a set of matrices $\setA=\{A_{1},A_{2}\}$, where 
  \[ 
    A_{1}=0.576 
    \begin{bmatrix}0.9 & 1.1 \\0 & 1 
    \end{bmatrix},\quad 
    A_{2}=0.8 
    \begin{bmatrix}1 & 0 \\1.0 & 0.9 
    \end{bmatrix}. 
  \] 
  For these matrices $\rho=\rho(\setA)=1.098668$; the unit ball 
  $S=\{x:\|x\|\le1\}$, the \textit{B}-norm $\|\cdot\|$ and the 
  \textit{DK}-body~$M$ are shown in Fig.~\ref{F:1}. In this figure, the 
  \textit{DK}-body~$M$ is denoted by the black solid line, the set $S$ is 
  denoted by the green solid line, the set $\rho^{-1}A_{1}M$ is denoted by 
  the red dotted line, and the set $\rho^{-1}A_{2}M$ is denoted by the blue 
  dash-dotted line. 
\end{example}
  \begin{figure}[htbp!] 
    \centering 
    \includegraphics*[width=0.48\textwidth]{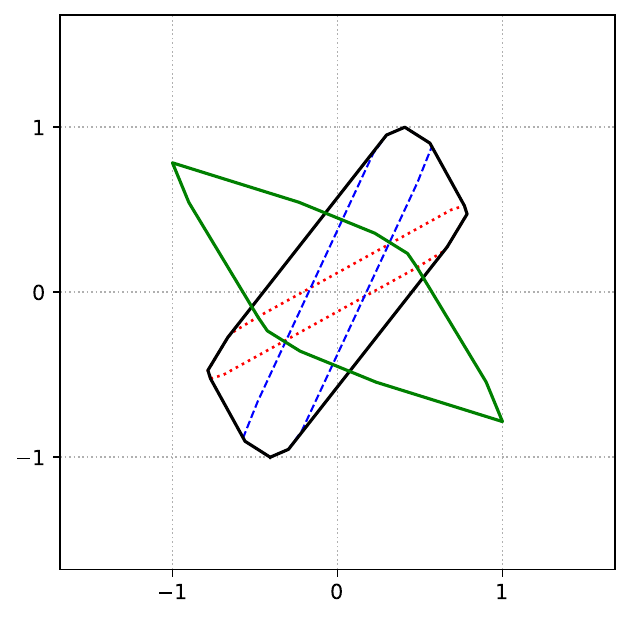} 
    \caption{Unit ball of the Barabanov norm and the Dranishnikov--Konyagin 
    body}\label{F:1} 
  \end{figure}

\subsection{Scheme of proof of Theorem~\ref{th:2}}\label{S:th2-proof} 
To prove Theorem~\ref{th:2}, one could almost verbatim repeat the proof of 
Theorem~\ref{T:Bar} presented in~\cite{Bar:AIT88-2:e}. However, the 
corresponding constructions in~\cite{Bar:AIT88-2:e} are rather cumbersome, 
and repeating them would not bring anything new in terms of ideas. Therefore, 
below we describe the idea of deriving the proof~\ref{th:2} directly from the 
statement of Theorem~\ref{T:Bar}, relying on the polar technique and not 
using the arguments from~\cite{Bar:AIT88-2:e}. In doing so, we use arguments 
close to those used in Section~\ref{S:tBDK} when substantiating the 
equivalence of the Barabanov and Dranishnikov--Konyagin theorems. 

Let $\{M_{n}\}$, $n=0,1,\ldots$, be a set of centrally symmetric convex 
bodies satisfying conditions CHR$_{1}$ and CHR$_{2}$ from 
Subsection~\ref{S:DKalg}. Let us associate each body $M_{n}$ with its polar: 
\[ 
  S_{n}=M_{n}^{\circ},\qquad n=0,1,2,\ldots\,, 
\] 
and take the polars from the left and right parts of 
equality~\eqref{eq:conv-relax}: 
\[ 
  S_{n+1}=M_{n+1}^{\circ}=\left( 
  \conv\left\{M_{n},\gamma^{-1}_{n}\bigcup_{i}A_{i}M_{n}\right\}\right)^{\circ}. 
\] 
Then, just as it was shown in section~\ref{S:tBDK}, we get: 
\begin{align*} 
  S_{n+1}=M_{n+1}^{\circ}&=  \left( 
  \conv\left\{M_{n},\gamma^{-1}_{n}\bigcup_{i}A_{i}M_{n}\right\}\right)^{\circ}
  \\&= 
  \left( 
  \conv\left\{M_{n}\bigcup\left(\gamma^{-1}_{n}\bigcup_{i}A_{i}M_{n}\right)\right\}\right)^{\circ} 
  \\ 
     &= 
     M_{n}^{\circ}\bigcap\left(\gamma^{-1}_{n}\bigcup_{i}A_{i}M_{n}\right)^{\circ}= 
     M_{n}^{\circ}\bigcap\left(\gamma_{n}\left(\bigcup_{i}A_{i}M_{n}\right)^{\circ}\right)
  \\ 
     &= 
     M_{n}^{\circ}\bigcap\left(\gamma_{n}\bigcap_{i}\left(A_{i}M_{n}\right)^{\circ}\right)= 
     M_{n}^{\circ}\bigcap\left(\gamma_{n}\bigcap_{i}(A_{i}^{T})^{-1}M_{n}^{\circ}\right)
  \\ 
     &= 
     S_{n}\bigcap\left(\gamma_{n}\bigcap_{i}(A_{i}^{T})^{-1}S_{n}\right), 
\end{align*} 
whence 
\begin{equation}\label{eq:Siter} 
  S_{n+1}= 
  S_{n}\bigcap\left(\gamma_{n}\bigcap_{i}(A_{i}^{T})^{-1}S_{n}\right), \qquad 
  n=0,1,\ldots\,. 
\end{equation} 

Let us now consider for each body $S_{n}$ its Minkowski norm: 
\[ 
  \|x\|_{n}:=\min\left\{t: t\ge0,~ x\in tS_{n}\right\}. 
\] 
Then, just as it was shown in Section~\ref{S:tBDK}, we obtain: 
\[ 
  \|x\|_{n+1}=\max\left\{\|x\|_{n}, 
  \gamma_{n}^{-1}\max_{i}\|A_{i}^{T}x\|_{n}\right\}, \qquad n=0,1,\ldots\,, 
\] 
i.e.\ for the norms $\|\cdot\|_{n}$ equalities~\eqref{eq:max-relax} of the 
max-relaxation algorithm for the set of matrices 
$\setA^{T}=\{A^{T}_{1},\ldots,A^{T}_{m}\}$ from Subsection~\ref{S:alrMR} will 
be satisfied. In this case, the relations defining the quantities 
$\rho_{n}^{+}$ and $\rho_{n}^{-}$ in terms of the Minkowski 
norms~$\|\cdot\|_{n}$ defined above take exactly the form~\eqref{eq:rho} 
(with the matrices $A_{i}$ replaced by their transposes $A^{T}_{i}$) from the 
MR$_{1}$ condition of the max-relaxation algorithm (see 
Subsection~\ref{S:alrMR}). 

According to the statement of Theorem~\ref{th:1} after 
calibration~\eqref{eq:calibr}, the obtained norms 
\begin{equation}\label{eq:calibr1} 
  \|x\|^{\bullet}_{n}=\|x\|_{n}/\|e\|_{n} 
\end{equation} 
will converge to some \textit{B}-norm $\|\cdot\|^{*}$. Rewriting 
relation~\eqref{eq:calibr1} in terms of balls $S_{n}=M_{n}^{\circ}$, and then 
passing to polars $S_{n}^{\circ}=M_{n}^{\circ\circ}=M_{n}$, we obtain that 
the sequence $\{\|e\|_{n}^{-1}M_{n}\}$ of ``calibrated'' bodies $\{M_{n}\}$ 
will converge to some \textit{DK}-body. As is easy to show, then the sequence 
of calibrated bodies $M^{\bullet}_{n}=\mu_{n}M_{n}$ (see~\eqref{eq:calibrM}), 
where $\mu_{n}$ is chosen so that the vector $e$ belongs to the boundary of 
the body $M^{\bullet}_{n}$, will also converge to some \textit{DK}-body. 

\begin{remark}\label{rem:other1} 
  Let us emphasize that the above scheme of proof of Theorem~\ref{th:2} is 
  really only a scheme of proof and requires a full clarification of some 
  technical details. 
\end{remark} 

\begin{remark}\label{rem:other2} 
  When implementing the max-relaxation algorithm, there is a need to 
  sequentially calculate the norms $\|\cdot\|_{n}$, which is not an obvious 
  task in terms of software implementation, since the numerical specification 
  of a norm, like any other function, may require significant computational 
  resources. To simplify this task and minimize possible calculation errors, 
  the most convenient way turned out to be to consider the so-called 
  polygonal norms $\|\cdot\|_{n}$, i.e.\ norms whose unit balls~$S_{n}$ are 
  convex centrally symmetric polyhedra. With this approach, the algorithm for 
  calculating the norms $\|\cdot\|_{n}$ is actually reduced to sequentially 
  calculating the polyhedra~$S_{n}$ using formula~\eqref{eq:Siter}. In this 
  case, all the necessary geometric transformations are reduced to applying 
  linear matrices to the vertices of the boundaries of polyhedral sets and 
  calculating the convex hulls of the polyhedral sets. All such 
  transformations (in the \texttt{Python} language) can be carried out 
  (without loss of computational precision) either using the \texttt{shapely} 
  package (in the case of two-dimensional matrices), or the \texttt{pyhull} 
  or \texttt{qhull} packages (in the case of matrices of dimension greater 
  than $3\times3$). 
\end{remark} 

\begin{remark}\label{rem:other3} 
  Formulas~\eqref{eq:Siter} assume non-degeneracy of matrices $A_{i}$ (or, 
  equivalently, matrices $A_{i}^{T}$). Calculating bodies $M_{n}$ by 
  formula~\eqref{eq:conv-relax} does not require non-degeneracy of the 
  corresponding matrices! Thus, calculating Dranishnikov--Konyagin bodies 
  (and subsequently obtaining the Barabanov body by taking the polar) is less 
  restrictive in computational terms compared to directly calculating the 
  Barabanov norm (body) by the max-relaxation algorithm. 
\end{remark} 

\section{Notes and comments}\label{S:extnorms} 

Let us consider some situations in which extremal norms can be found in some 
reasonable sense ``simply''. 

In 1995, a conjecture was formulated in the paper~\cite{LagWang:LAA95}, which 
has since become known as the ``Lagarias--Wang finiteness conjecture'': 
\begin{quote} 
  \emph{For any set of matrices~$\setA$ the limit~\eqref{eq:GSR} is 
  attained at some finite value $n$. In other words, there exists an $n$ 
  such that in~\eqref{eq:sprad} the equality} 
  \[ 
    \bar{\rho}_{n}({\setA})= \bar{\rho}({\setA})\quad (=\rho({\setA})) 
  \] 
  \emph{holds.} 
\end{quote} 
In 2002, this hypothesis was refuted~\cite{BM:JAMS02}, and a little later 
other versions of its refutation appeared~\cite{BTV:SIAMJMA03, Koz:CDC05:e}. 
Nevertheless, the very formulation of the finiteness hypothesis gave rise to 
numerous studies and had a significant influence on the development 
of the theory of the generalized spectral radius, see, for example, the 
bibliography in~\cite{Koz:IITP13}. 

A similar question can be asked for the joint spectral radius: 
\begin{quote} 
  \emph{Does the equality} 
  \begin{equation}\label{eq:JSReq} 
    (\bar{\rho}({\setA})=)\quad \rho({\setA})=\rho_{n}({\setA}) 
  \end{equation} 
  \emph{always hold for some $n$ in~\eqref{eq:sprad}?} 
\end{quote} 
Here, however, the very formulation of the question requires clarification, 
since the values of $\rho_{n}({\setA})$ depend on the choice of the norm 
$\|\cdot\|$. Therefore, in the case when~$\|\cdot\|$ is taken to be a 
\textit{B}-norm (although a'priori unknown, nevertheless existing!), the 
answer to the question posed is positive: equality~\eqref{eq:JSReq} is 
satisfied already for $n=1$. If the norm $\|\cdot\|$ is arbitrary, then the 
answer to this question is generally negative. 

However, the following question can be asked here: 
\begin{quote} 
  \emph{Let us assume that in some norm $\|\cdot\|$ and for some $n$ 
  equality~\eqref{eq:JSReq} is still satisfied. Can this make it easier to 
  construct an extremal norm for a set of matrices~$\setA$?} 
\end{quote} 
Let us show that the answer to this question is partially positive. 

Let us define by analogy with~\eqref{eq:defrA} for $n\ge1$ and 
$x\in\mathbb{R}^{d}$ the values (seminorms) 
\begin{equation}\label{eq:defn} 
  r_{n}({\setA},x)=\max_{\boldsymbol{\sigma}\in{\{1,\ldots,m\}}^{n}} 
  \|A_{\boldsymbol{\sigma}}x\|= 
  \max_{\boldsymbol{\sigma}\in{\{1,\ldots,m\}}^{n}} \|A_{\sigma_{n}}\cdots 
  A_{\sigma_{2}}A_{\sigma_{1}}x\|. 
\end{equation} 

\begin{lemma}\label{L:main} 
  Let $\varkappa$ be given such that $r_{n}({\setA})\le\varkappa^{n}$ for 
  some $n\ge 1$. Then in the norm 
  \[ 
    \|x\|_{n}:=\max\left\{\|x\|,\frac{1}{\varkappa}r_{1}({\setA},x),\ldots,\frac{1}{\varkappa^{n-1}}r_{n-1}({\setA},x)\right\} 
  \] 
  the inequality 
  \begin{equation}\label{eq:finineq} 
    \max\left\{\|A_{1}x\|_{n},\|A_{2}x\|_{n},\ldots,\|A_{m}x\|_{n}\right\}\le 
    \varkappa\|x\|_{n}, \qquad\forall~x\in\mathbb{R}^{d}, 
  \end{equation} 
  holds. 
\end{lemma}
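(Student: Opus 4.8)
The plan is to recast the definition of $\|\cdot\|_{n}$ in a uniform way and then reduce the asserted inequality to a single ``shift'' observation combined with the standing hypothesis on $r_{n}$. First I would introduce the convention $r_{0}(\setA,x):=\|x\|$ (the empty product), so that the defining formula collapses to the compact expression
\[
  \|x\|_{n}=\max_{0\le k\le n-1}\varkappa^{-k}r_{k}(\setA,x).
\]
Since for each $k\ge1$ the quantity $r_{k}(\setA,\cdot)$ is a maximum of norms $\|A_{\boldsymbol{\sigma}}\,\cdot\|$ of linear images, it is a seminorm, and the presence of the $k=0$ term $\|\cdot\|$ guarantees positive definiteness; hence $\|\cdot\|_{n}$ is a genuine norm. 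I would also note, since the statement writes $r_{n}(\setA)$ without the argument $x$, that it denotes $\sup_{\|x\|\le1}r_{n}(\setA,x)=\max_{\boldsymbol{\sigma}}\|A_{\boldsymbol{\sigma}}\|$, so the hypothesis $r_{n}(\setA)\le\varkappa^{n}$ is exactly the pointwise bound $r_{n}(\setA,x)\le\varkappa^{n}\|x\|$ for all $x$.

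The key step is a shift inequality at the level of the seminorms $r_{k}$. For every index $i$ and every $k\ge0$,
\[
  r_{k}(\setA,A_{i}x)=\max_{\boldsymbol{\sigma}\in\{1,\ldots,m\}^{k}}\|A_{\boldsymbol{\sigma}}A_{i}x\|
  \le\max_{\boldsymbol{\tau}\in\{1,\ldots,m\}^{k+1}}\|A_{\boldsymbol{\tau}}x\|=r_{k+1}(\setA,x),
\]
because the matrices $A_{\boldsymbol{\sigma}}A_{i}$ with $\boldsymbol{\sigma}$ of length $k$ are precisely the length-$(k+1)$ products $A_{\boldsymbol{\tau}}$ whose right-most factor is $A_{i}$, and therefore form a subfamily of all products of length $k+1$.

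Combining these, I would estimate term by term and reindex by $j=k+1$:
\[
  \|A_{i}x\|_{n}=\max_{0\le k\le n-1}\varkappa^{-k}r_{k}(\setA,A_{i}x)
  \le\varkappa\max_{0\le k\le n-1}\varkappa^{-(k+1)}r_{k+1}(\setA,x)
  =\varkappa\max_{1\le j\le n}\varkappa^{-j}r_{j}(\setA,x).
\]
The max on the right now ranges over $j=1,\ldots,n$, whereas $\|x\|_{n}$ is the max over $j=0,\ldots,n-1$; the two windows agree on $j=1,\ldots,n-1$, so everything reduces to comparing the two boundary terms. The top term is exactly where the hypothesis enters: from $r_{n}(\setA,x)\le\varkappa^{n}\|x\|$ one gets $\varkappa^{-n}r_{n}(\setA,x)\le\|x\|=r_{0}(\setA,x)$, so the new index $j=n$ is dominated by the $j=0$ term already present in $\|x\|_{n}$. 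Hence $\max_{1\le j\le n}\varkappa^{-j}r_{j}(\setA,x)\le\|x\|_{n}$, which yields $\|A_{i}x\|_{n}\le\varkappa\|x\|_{n}$ for every $i$ and therefore the inequality~\eqref{eq:finineq}.

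The computation is short, so there is no serious analytic obstacle; the one point requiring care is the index bookkeeping — recognizing that passing from $A_{i}x$ back to $x$ shifts the averaging window $\{0,\ldots,n-1\}$ to $\{1,\ldots,n\}$, that the single disappearing index $j=0$ supplies the dominating term $\|x\|$, and that the single new index $j=n$ is precisely the one pinned down by the assumption $r_{n}(\setA)\le\varkappa^{n}$. This alignment of the hypothesis with the boundary term is the real content of the lemma, and it is what makes $\|\cdot\|_{n}$ an extremal norm for $\varkappa$ whenever the finiteness-type bound $r_{n}(\setA)\le\varkappa^{n}$ holds.
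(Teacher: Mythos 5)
Your proposal is correct and is essentially the paper's own proof: the chain of inequalities $r_{k}(\setA,A_{i}x)\le r_{k+1}(\setA,x)$ is exactly the ``shift'' step the paper writes out term by term, and the hypothesis $r_{n}(\setA,x)\le\varkappa^{n}\|x\|$ is used in the same way to dominate the top term by $\varkappa\|x\|$. Your $r_{0}(\setA,x):=\|x\|$ convention and the explicit window-reindexing $\{0,\ldots,n-1\}\to\{1,\ldots,n\}$ merely make the paper's bookkeeping more systematic; there is no substantive difference.
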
 
\begin{corollary*} 
  If for some $n\ge1$ equality~\eqref{eq:JSReq} holds, then the norm 
  $\|\cdot\|_{n}$ defined by Lemma~\ref{L:main} is extremal for the set of 
  matrices~$\setA$. 
\end{corollary*}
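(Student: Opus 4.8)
The Corollary is an immediate specialization of Lemma~\ref{L:main}, which I am free to assume, so the plan is to record that reduction but to first lay out how I would prove the Lemma itself, since that carries all the content. The first move is cosmetic but clarifying: reading the empty word as the identity gives $r_{0}(\setA,x)=\|x\|$, so the defining formula collapses to the single expression
\[
  \|x\|_{n}=\max_{0\le k\le n-1}\varkappa^{-k}r_{k}(\setA,x).
\]
Before anything else I would note that this is genuinely a norm: each $r_{k}(\setA,\cdot)$ is a seminorm (a finite maximum of the seminorms $x\mapsto\|A_{\sigma}x\|$), and the presence of the $k=0$ term $\|x\|$ makes the maximum positive definite as well as homogeneous.

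The engine of the whole argument is a single \emph{shift} inequality,
\[
  r_{k}(\setA,A_{j}x)\le r_{k+1}(\setA,x),\qquad 1\le j\le m,\ k\ge 0.
\]
This holds because, with the paper's convention $A_{\sigma}=A_{\sigma_{n}}\cdots A_{\sigma_{1}}$, one has $A_{\sigma}(A_{j}x)=A_{\tau}x$, where $\tau$ is the length-$(k+1)$ word obtained by inserting $j$ as the first-applied letter of $\sigma$; thus maximizing $\|A_{\sigma}A_{j}x\|$ over length-$k$ words $\sigma$ is a maximum over a \emph{subset} of the length-$(k+1)$ words, which is dominated by $r_{k+1}(\setA,x)$.

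With this in hand the computation is a telescoping reindexing. For fixed $j$,
\[
  \|A_{j}x\|_{n}=\max_{0\le k\le n-1}\varkappa^{-k}r_{k}(\setA,A_{j}x)\le\max_{0\le k\le n-1}\varkappa^{-k}r_{k+1}(\setA,x)=\varkappa\max_{1\le\ell\le n}\varkappa^{-\ell}r_{\ell}(\setA,x),
\]
after setting $\ell=k+1$. The bracketed maximum runs over $\ell=1,\dots,n$, whereas $\|x\|_{n}$ runs over $k=0,\dots,n-1$; the indices $1,\dots,n-1$ are shared, and the only stray term is $\ell=n$. This is exactly where the hypothesis $r_{n}(\setA)\le\varkappa^{n}$ --- read as $r_{n}(\setA,x)\le\varkappa^{n}\|x\|$ for all $x$ --- enters: it forces $\varkappa^{-n}r_{n}(\setA,x)\le\|x\|=r_{0}(\setA,x)$, so the stray term is absorbed by the otherwise-missing $k=0$ term. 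Hence $\max_{1\le\ell\le n}\varkappa^{-\ell}r_{\ell}(\setA,x)\le\|x\|_{n}$, giving $\|A_{j}x\|_{n}\le\varkappa\|x\|_{n}$, and taking the maximum over $j$ proves~\eqref{eq:finineq}. For the Corollary, equality~\eqref{eq:JSReq} says $\rho(\setA)=\rho_{n}(\setA)=\max_{\sigma}\|A_{\sigma}\|^{1/n}$, i.e.\ $r_{n}(\setA)=\rho(\setA)^{n}$, so applying the Lemma with $\varkappa=\rho(\setA)$ turns~\eqref{eq:finineq} into precisely the extremality condition~\eqref{eq:extnorm}.

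The one place I would be careful is the composition-order bookkeeping in the shift inequality: the new letter $j$ must be inserted so that $A_{j}$ is applied \emph{first}, matching $A_{\sigma}=A_{\sigma_{n}}\cdots A_{\sigma_{1}}$, so that the enlarged word is an admissible word of length $k+1$. Everything else --- the reindexing and the single boundary estimate at $\ell=n$ --- is routine, so I expect no real obstacle beyond this convention check.
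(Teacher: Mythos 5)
Your proof is correct and takes essentially the same route as the paper: the Corollary is obtained exactly as in the paper by taking $\varkappa=\rho(\setA)$ in Lemma~\ref{L:main} and reading \eqref{eq:JSReq} as $r_{n}(\setA)=\rho(\setA)^{n}$, so that \eqref{eq:finineq} becomes the extremality condition \eqref{eq:extnorm}. Your proof of the Lemma itself (the shift inequality $r_{k}(\setA,A_{j}x)\le r_{k+1}(\setA,x)$, the reindexing, and absorbing the stray $\ell=n$ term via $r_{n}(\setA,x)\le\varkappa^{n}\|x\|$) is also the paper's argument, merely written in the cleaner unified notation $r_{0}(\setA,x)=\|x\|$.
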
 
To prove this corollary, it suffices to note that under its conditions it is 
enough to take $\rho({\setA})$ as $\varkappa$ and then use the assertion of 
Lemma~\ref{L:main}. Therefore, let us proceed to the proof of 
Lemma~\ref{L:main}. 
\begin{proof}[Proof of Lemma~\ref{L:main}] 
  Let us choose some matrix $A_{i}\in\setA$. Then from 
  definition~\eqref{eq:defn} of the quantities $r_{k}(\setA,x)$ we obtain the 
  following inequalities: 
  \begin{align*} 
    \|A_{i}x\|          & \le r_{1}(\setA,x), \\ 
    r_{1}(\setA,A_{i}x) & \le r_{2}(\setA,x), \\ & \cdots              \\ 
    r_{n-1}(\setA,A_{i}x) & \le r_{n}(\setA,x). 
  \end{align*} 
  From here 
  \begin{align*} 
    \|A_{i}x\|_{n}&=                                                                                                                                  
    \max\left\{\|A_{1}x\|,\frac{1}{\varkappa}r_{1}({\setA},A_{1}x),\ldots, 
    \frac{1}{\varkappa^{n-1}}r_{n-1}({\setA},A_{1}x)\right\}
    \\ 
& \le  \max\left\{r_{1}({\setA},x),\frac{1}{\varkappa}r_{2}({\setA},x),\ldots, 
\frac{1}{\varkappa^{n-1}}r_{n}({\setA},x)\right\},
    \\ 
    \intertext{and since by the condition of the lemma $r_{n}(\setA)\le 
    \varkappa^{n}$, then $r_{n}(\setA,x)\le \varkappa^{n} \|x\|$, and 
    therefore}  &\le
    \max\left\{r_{1}({\setA},x),\frac{1}{\varkappa}r_{2}({\setA},x),\ldots, 
    \varkappa\|x\|\right\}\le 
    \\ 
&\le \varkappa\max\left\{\|x\|,\frac{1}{\varkappa}r_{1}({\setA},x),\ldots, 
\frac{1}{\varkappa^{n-1}}r_{n-1}({\setA},x)\right\} =\varkappa \|x\|_{n}. 
  \end{align*} 
  Taking now the maximum over $i$ in the left-hand side of this group of 
  inequalities, we obtain the required statement~\eqref{eq:finineq}. 
\end{proof} 

Lemma~\ref{L:main} naturally raises the following question: 
\begin{quote} 
  \emph{If the extremal norm is known, how (and is it possible) to construct 
  a \textit{B}-norm for the set of matrices~$\setA$ from it?} 
\end{quote} 
One possible answer to this question can be obtained using the following two 
lemmas. 
\begin{lemma}\label{L:ext2bar} 
  Let $\|\cdot\|$ be extremal and $\|\cdot\|_{*}$ be a \textit{B}-norm for a 
  matrix set~$\setA$. Let, in addition, $\rho=\rho(\setA)$, and let the 
  numbers $\alpha>0$ and $\beta>0$ be such that 
  \begin{equation}\label{eq:ext2bar1} 
    \beta\|x\|_{*}\le\|x\|\le\alpha\|x\|_{*}, 
    \qquad\forall~x\in\mathbb{R}^{d}. 
  \end{equation} 

  Then 
  \begin{equation}\label{eq:x0def} 
    \|x\|_{0}=\frac{1}{\rho}\max_{i}\|A_{i}x\| 
  \end{equation} 
  is an extremal norm and the following inequalities hold for it 
  \begin{equation}\label{eq:x0bounds} 
    \beta\|x\|_{*}\le\|x\|_{0}\le\|x\|\le\alpha\|x\|_{*}, 
    \qquad\forall~x\in\mathbb{R}^{d}. 
  \end{equation} 
\end{lemma}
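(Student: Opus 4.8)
The plan is to verify in turn that $\|\cdot\|_0$ from~\eqref{eq:x0def} is genuinely a norm, that the two-sided estimate~\eqref{eq:x0bounds} holds, and that $\|\cdot\|_0$ is extremal. Positive homogeneity and the triangle inequality for $\|\cdot\|_0$ are inherited directly from those of $\|\cdot\|$, since the maps $A_i$ are linear and $\max_i(\|A_ix\|+\|A_iy\|)\le\max_i\|A_ix\|+\max_i\|A_iy\|$. The one genuinely nontrivial point for ``normhood'' is positive definiteness, namely that $\|x\|_0=0$ forces $x=0$; I would deliberately postpone this and obtain it for free from the lower bound proved below, since $\|\cdot\|_*$ is already known to be a norm.

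For the estimate~\eqref{eq:x0bounds} I would handle the upper and lower bounds separately. The upper bound is immediate: extremality of $\|\cdot\|$ in the form $\max_i\|A_ix\|\le\rho\|x\|$ gives $\|x\|_0=\tfrac{1}{\rho}\max_i\|A_ix\|\le\|x\|$, and the rightmost inequality $\|x\|\le\alpha\|x\|_*$ is exactly the hypothesis~\eqref{eq:ext2bar1}. For the lower bound I would invoke the defining $B$-norm identity~\eqref{eq:mane-bar} for $\|\cdot\|_*$, that is $\max_i\|A_ix\|_*=\rho\|x\|_*$: combining the left inequality of~\eqref{eq:ext2bar1} applied to $A_ix$, $\beta\|A_ix\|_*\le\|A_ix\|$, and taking the maximum over $i$ yields $\beta\rho\|x\|_*\le\max_i\|A_ix\|$, whence $\|x\|_0\ge\beta\|x\|_*$. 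This completes the chain~\eqref{eq:x0bounds}, and since $\beta\|x\|_*=0$ only for $x=0$, it simultaneously supplies the missing definiteness of $\|\cdot\|_0$.

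It remains to verify that $\|\cdot\|_0$ is extremal. The cleanest route reuses the upper bound $\|y\|_0\le\|y\|$ just established, applied to $y=A_jx$: for each $j$,
\[
  \|A_jx\|_0\le\|A_jx\|\le\max_i\|A_ix\|=\rho\|x\|_0,
\]
so that taking the maximum over $j$ gives $\max_j\|A_jx\|_0\le\rho\|x\|_0$, which is precisely~\eqref{eq:extnorm} for $\|\cdot\|_0$.

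There is no serious obstacle here; the whole argument reduces to a handful of one-line estimates. The only subtle point worth flagging is positive definiteness of $\|\cdot\|_0$: a priori $\max_i\|A_ix\|$ might conceivably vanish for some $x\neq0$ (this is where $\rho=\rho(\setA)>0$ and, implicitly, the irreducibility underlying the existence of a $B$-norm come into play), but the lower bound $\|x\|_0\ge\beta\|x\|_*$ disposes of this worry cleanly. Accordingly, I would organize the write-up so that the lower bound in~\eqref{eq:x0bounds} is established \emph{before} asserting that $\|\cdot\|_0$ is a norm.
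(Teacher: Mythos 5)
Your proposal is correct and follows essentially the same route as the paper's own proof: the upper bound from extremality of $\|\cdot\|$, the lower bound from the Barabanov identity $\max_i\|A_ix\|_*=\rho\|x\|_*$ combined with $\beta\|A_ix\|_*\le\|A_ix\|$, and extremality of $\|\cdot\|_0$ via $\|A_kx\|_0\le\|A_kx\|\le\max_i\|A_ix\|=\rho\|x\|_0$. Your explicit attention to positive definiteness of $\|\cdot\|_0$ (settled by the lower bound $\|x\|_0\ge\beta\|x\|_*$) is a small point the paper passes over silently, but it does not change the argument.
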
 
\begin{proof} 
  First, note that due to the extremality of the norm $\|\cdot\|$, the 
  inequalities 
  \[ 
    \|x\|_{0}= \frac{1}{\rho}\max_{i}\|A_{i}x\| 
    \le\frac{1}{\rho}\max_{i}\{\rho\|x\|\} \le\|x\| 
  \] 
  hold, and then the inequalities 
  \begin{equation}\label{eq:ext2bar1r} 
    \|x\|_{0}\le\|x\|\le\alpha\|x\|_{*}, \qquad\forall~x\in\mathbb{R}^{d}, 
  \end{equation} 
  follow from condition~\eqref{eq:ext2bar1}. 

  Let us now prove that $\beta\|x\|_{*}\le\|x\|_{0}$. To do this, we write 
  out the following chain of inequalities: 
  \begin{align}\nonumber 
    \|x\|_{0} =\frac{1}{\rho}\max_{i}\|A_{i}x\| 
    &\ge\frac{1}{\rho}\max_{i}\beta\|A_{i}x\|_{*}\\
    \label{eq:ext2bar1l} 
    &=\frac{\beta}{\rho}\max_{i}\|A_{i}x\|_{*}= 
    \frac{\beta}{\rho}\rho\|x\|_{*}=\beta\|x\|_{*} 
  \end{align} 
  (here the first equality follows from definition~\eqref{eq:x0def} of the 
  norm $\|\cdot\|_{0}$, the second inequality follows from the left-hand side 
  of condition~\eqref{eq:ext2bar1}, the third equality is obtained by taking 
  the factor $\beta$ out from under the maximum sign, the fourth equality is 
  satisfied by the assumption that $\|\cdot\|_{*}$ is a \textit{B}-norm, the 
  last equality is obvious). From~\eqref{eq:ext2bar1r} 
  and~\eqref{eq:ext2bar1l} inequalities~\eqref{eq:x0bounds} follow. 

  It remains to prove that the norm $\|\cdot\|_{0}$ is extremal. To prove 
  this, we fix an arbitrary index $k\in\{1,\ldots,m\}$ and write out the 
  following chain of inequalities: 
  \[ 
    \|A_{k}x\|_{0}=\frac{1}{\rho}\max_{i}\|A_{i}A_{k}x\| 
    \le\frac{1}{\rho}\max_{i}\rho\|A_{k}x\| =\|A_{k}x\| \le\max_{i}\|A_{i}x\| 
    =\rho\|x\|_{0} 
  \] 
  (here the first equality follows from definition~\eqref{eq:x0def} of the 
  norm $\|\cdot\|_{0}$, the second inequality is true due to the extremality 
  of the norm $\|\cdot\|$, the other relations are obvious). Since in the 
  obtained relations the index $k$ was assumed to be arbitrary, then from 
  them follows the inequality 
  \[ 
    \max_{k}\|A_{k}x\|_{0}\le\rho\|x\|_{0}, 
  \] 
  proving the extremality of the norm $\|\cdot\|_{0}$. This remark completes 
  the proof of Lemma~\ref{L:ext2bar}. 
\end{proof} 

The following lemma shows that the max-relaxation algorithm can be 
significantly simplified if we initially know some extremal norm for the set 
of matrices~$\setA$. 
\begin{lemma}\label{L:extlim} 
  Let~$\setA$ be an irreducible set of matrices and $\|\cdot\|_{0}$ be an 
  extremal norm. Then the sequence of norms 
  \begin{equation}\label{eq:sefnormsn} 
    \|x\|_{n+1}=\frac{1}{\rho}\max_{i}\|A_{i}x\|_{n},\qquad n=0,1,\ldotsb, 
  \end{equation} 
  monotonically decreases and converges to some \textit{B}-norm. 
\end{lemma}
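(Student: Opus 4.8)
The plan is to read the recursion~\eqref{eq:sefnormsn} as repeated application of the operator $T$ that sends a norm $\|\cdot\|$ to $\frac{1}{\rho}\max_i\|A_i\cdot\|$, and to exploit the fact — already established in Lemma~\ref{L:ext2bar} — that $T$ maps extremal norms to extremal norms while decreasing them pointwise and preserving a lower bound. Concretely, I would first invoke Theorem~\ref{T:Bar} (applicable because $\setA$ is irreducible) to fix a genuine \textit{B}-norm $\|\cdot\|_*$, and choose constants $\alpha,\beta>0$ with $\beta\|x\|_*\le\|x\|_0\le\alpha\|x\|_*$ for all $x$, which exist by equivalence of norms on $\mathbb{R}^{d}$. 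Since $\|\cdot\|_{n+1}=T\|\cdot\|_n$, Lemma~\ref{L:ext2bar} applies at every step, with $\|\cdot\|_n$ playing the role of its input extremal norm $\|\cdot\|$ and $\|\cdot\|_{n+1}$ playing the role of its output norm $\|\cdot\|_0$.

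By induction on $n$ this yields three facts simultaneously: each $\|\cdot\|_n$ is extremal (needed so that Lemma~\ref{L:ext2bar} can be reapplied), the sequence decreases pointwise, $\|x\|_{n+1}\le\|x\|_n$, and — crucially — the lower bound is preserved, $\beta\|x\|_*\le\|x\|_n\le\alpha\|x\|_*$ for every $n$ and every $x$. Consequently, for each fixed $x$ the numerical sequence $\{\|x\|_n\}$ is nonincreasing and bounded below by $\beta\|x\|_*$, hence converges to a finite limit $\|x\|_\infty:=\lim_{n}\|x\|_n$. Passing to the limit in the homogeneity and triangle inequalities satisfied by each $\|\cdot\|_n$ shows that $\|\cdot\|_\infty$ is again homogeneous and subadditive, while the surviving bound $\|x\|_\infty\ge\beta\|x\|_*>0$ for $x\neq0$ guarantees positive-definiteness; thus $\|\cdot\|_\infty$ is a bona fide norm.

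It remains to identify $\|\cdot\|_\infty$ as a \textit{B}-norm, which I would do by passing to the limit in the defining recursion~\eqref{eq:sefnormsn} itself. Since the maximum over the finite index set $\{1,\dots,m\}$ is a continuous function of its arguments and $\|A_i x\|_n\to\|A_i x\|_\infty$ for each $i$ (this is just the pointwise convergence evaluated at the points $A_i x$), the right-hand side $\frac{1}{\rho}\max_i\|A_i x\|_n$ converges to $\frac{1}{\rho}\max_i\|A_i x\|_\infty$, while the left-hand side converges to $\|x\|_\infty$. Equating the two limits gives $\max_i\|A_i x\|_\infty=\rho\|x\|_\infty$ for all $x$, i.e.\ exactly the Barabanov identity~\eqref{eq:mane-bar} with $\rho=\rho(\setA)$. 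Uniform convergence on bounded sets, beyond the pointwise convergence already obtained, then follows from Dini's theorem: the $\|\cdot\|_n$ are continuous, decrease monotonically to the continuous limit $\|\cdot\|_\infty$, and the unit sphere of $\|\cdot\|_*$ is compact, so by homogeneity the convergence extends from the sphere to every bounded set.

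I expect the only genuinely delicate point to be ruling out degeneration of the limit to the zero seminorm; everything rests on the uniform lower bound $\|x\|_n\ge\beta\|x\|_*$, which is precisely the content of the left-hand inequality in~\eqref{eq:x0bounds} of Lemma~\ref{L:ext2bar} and is where irreducibility enters, through the existence of the comparison \textit{B}-norm $\|\cdot\|_*$. Once this bound is in hand the argument is a routine monotone-limit-plus-Dini passage.
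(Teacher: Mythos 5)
Your proposal is correct and follows essentially the same route as the paper: fix a \textit{B}-norm $\|\cdot\|_{*}$ via Theorem~\ref{T:Bar}, iterate Lemma~\ref{L:ext2bar} to obtain the sandwich $\beta\|x\|_{*}\le\|x\|_{n+1}\le\|x\|_{n}\le\alpha\|x\|_{*}$, conclude pointwise convergence to a limit that the lower bound forces to be a genuine norm rather than a seminorm, and pass to the limit in~\eqref{eq:sefnormsn} to obtain the Barabanov identity. Your explicit induction on extremality of each $\|\cdot\|_{n}$ and the Dini-theorem remark on uniform convergence are welcome elaborations of steps the paper leaves implicit, but they do not change the argument.
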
 
\begin{proof} 
  Let us take an arbitrary \textit{B}-norm~$\|\cdot\|_{*}$ (according to 
  Barabanov's theorem, it exists due to the assumption of the irreducibility 
  of the set of matrices~$\setA$) and choose such numbers $\alpha$ and 
  $\beta>0$ for which inequalities~\eqref{eq:ext2bar1} from 
  Lemma~\ref{L:ext2bar} are satisfied. Then, by Lemma~\ref{L:ext2bar}, the 
  following relations will hold: \begin{equation}\label{eq:seqnorms} 
  \beta\|x\|_{*}\le\|x\|_{n+1}\le\|x\|_{n}\le\alpha\|x\|_{*},\qquad 
  n=0,1,\ldotsb, 
  \end{equation} 
  which say that the sequence of norms $\{\|x\|_{n}\}$ monotonically 
  decreases (does not increase) pointwise and is bounded from below by the 
  norm $\beta\|x\|_{*}$. Therefore, this sequence converges on each element 
  $x\in\mathbb{R}^{d}$ to some seminorm\footnote{Without additional 
  assumptions, the pointwise limit of norms is only a seminorm, not a norm!} 
  $\|x\|_{\infty}$. And since by~\eqref{eq:seqnorms} this seminorm is bounded 
  from below by the norm $\beta\|x\|_{*}$, it is in fact a norm! 

  It remains to prove that the $\|\cdot\|_{\infty}$ is a Barabanov norm. To 
  do this, it suffices to pass to the limit as ${n\to\infty}$ in 
  equality~\eqref{eq:sefnormsn} defining the sequence of norms 
  $\{\|\cdot\|_{n}\}$. As a result, we obtain: 
  \[ 
    \|x\|_{\infty}=\frac{1}{\rho}\max_{i}\|A_{i}x\|_{\infty}, 
    \quad\text{and}\quad \beta\|x\|_{*}\le\|x\|_{\infty}\le\alpha\|x\|_{*}. 
  \] 
  The lemma is proved. 
\end{proof} 

A result similar to Lemma~\ref{L:extlim} is also true for the construction of 
Dranishnikov–Konyagin bodies. 

\begin{lemma}\label{L:dkplim} 
  Let $\setA=\{A_{1},\ldots,A_{m}\}$ be an irreducible matrix set, 
  $\|\cdot\|$ be some extremal norm for the matrix set~$\setA$, 
  $S=\{x\in\mathbb{R}^d: \|x\|\le 1\}$ be the unit ball in this norm, and 
  $\rho = \rho(\setA)$. Then the sequence of bodies 
  \begin{equation}\label{Eq:Mseq} 
    M_{n+1}=\frac{1}{\rho}\conv\left(\bigcup_{i}A_{i}M_{n}\right),\quad 
    n=0,1,\ldots,\quad\text{where}\quad M_{0}=S, 
  \end{equation} 
  monotonically decreasing converges to some \textit{DK}-body. 
\end{lemma}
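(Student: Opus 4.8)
The plan is to reduce the statement to Lemma~\ref{L:extlim} by means of the polar technique already exploited in Subsection~\ref{SS:BeqDK} and in the proof scheme of Subsection~\ref{S:th2-proof}. I associate with each body $M_n$ its polar $S_n=M_n^{\circ}$, so that $S_0=S^{\circ}$, and take polars in~\eqref{Eq:Mseq}. Since each $A_iM_n$ is centrally symmetric, $\conv(\bigcup_i A_iM_n)=\absco(\bigcup_i A_iM_n)$, and properties (ii), (iv)--(vi) of polars give (even more simply than in Subsection~\ref{S:th2-proof}, as neither an averaging factor $\gamma_n$ nor the term $M_n$ occurs in~\eqref{Eq:Mseq})
\[
  S_{n+1}=\Bigl(\tfrac{1}{\rho}\conv\bigl(\bigcup_{i}A_iM_n\bigr)\Bigr)^{\circ}
  =\rho\bigcap_{i}(A_i^{T})^{-1}S_n,\qquad n=0,1,\ldotsb\,.
\]
In terms of the Minkowski norms $\|x\|_n:=\min\{t\ge0: x\in tS_n\}$ this is precisely the simplified recursion~\eqref{eq:sefnormsn} of Lemma~\ref{L:extlim}, written for the transposed set $\setA^{T}=\{A_1^{T},\ldots,A_m^{T}\}$.

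First I would verify that Lemma~\ref{L:extlim} is applicable to $\setA^{T}$. The set $\setA^{T}$ is irreducible together with $\setA$ (the common invariant subspaces of $\setA^{T}$ are the orthogonal complements of those of $\setA$), and $\rho(\setA^{T})=\rho(\setA)=\rho$. Moreover $S_0=S^{\circ}$ is the unit ball of an extremal norm for $\setA^{T}$: extremality of $\|\cdot\|$ for $\setA$ means $A_iS\subseteq\rho S$ for every $i$, and applying properties (iii)--(v) of polars turns this into $A_i^{T}S^{\circ}\subseteq\rho S^{\circ}$, i.e.\ extremality of the dual norm for $\setA^{T}$ with the same constant $\rho$. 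Thus all the hypotheses of Lemma~\ref{L:extlim} hold for $\setA^{T}$ with the initial norm determined by $S_0$.

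By Lemma~\ref{L:extlim} the norms $\|\cdot\|_n$ then decrease monotonically and converge to a \textit{B}-norm $\|\cdot\|_{\infty}$ of $\setA^{T}$; its proof additionally supplies a fixed reference \textit{B}-norm $\|\cdot\|_{*}$ of $\setA^{T}$ and constants with $\beta\|x\|_{*}\le\|x\|_n\le\alpha\|x\|_{*}$. Decreasing norms correspond to increasing balls, and since polars reverse inclusions the bodies $M_n=S_n^{\circ}$ do decrease, as asserted. The two-sided bound traps all the balls between fixed multiples of the reference ball $S_{*}$, namely $\tfrac1\alpha S_{*}\subseteq S_n\subseteq\tfrac1\beta S_{*}$, so the $S_n$ are uniformly bounded and uniformly non-degenerate, whence $S_n\to S_{\infty}$ in the Hausdorff metric. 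Finally $S_{\infty}$ is the unit ball of a \textit{B}-norm of $\setA^{T}$, so by the equivalence established in Subsection~\ref{SS:BeqDK} its polar $M_{\infty}=S_{\infty}^{\circ}$ satisfies $M_{\infty}=\tfrac1\rho\conv(\bigcup_i A_iM_{\infty})$, i.e.\ $M_{\infty}$ is a \textit{DK}-body of $\setA$.

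The step I expect to require the most care is the last transfer of convergence through polarity: promoting $S_n\to S_{\infty}$ to $M_n=S_n^{\circ}\to M_{\infty}=S_{\infty}^{\circ}$. The polarity map is not continuous on arbitrary Hausdorff-convergent sequences of convex bodies --- bodies whose boundaries approach the origin blow up under polarity --- but it is continuous on families that stay uniformly bounded and uniformly away from degeneracy, which is exactly what the sandwich $\tfrac1\alpha S_{*}\subseteq S_n\subseteq\tfrac1\beta S_{*}$ secures. Equivalently, one may avoid polarity in the limit altogether and deduce Hausdorff convergence of $M_n$ directly from the nested monotone structure $M_{n+1}\subseteq M_n$ together with the uniform lower inclusion $\beta\,S_{*}^{\circ}\subseteq M_n$, identifying the limit with $\bigcap_n M_n$ and passing to the limit in~\eqref{Eq:Mseq} via continuity of $M\mapsto\tfrac1\rho\conv(\bigcup_i A_iM)$.
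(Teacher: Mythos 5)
Your proof is correct in substance, but it takes a genuinely different route from the paper's. The paper argues directly with the bodies $M_{n}$: it shows by induction that $A_{i}M_{n}\subseteq\rho M_{n}$ and hence $M_{n+1}\subseteq M_{n}$, bounds the sequence from below by an arbitrary \textit{DK}-body $M_{*}\subseteq M_{0}$ (whose existence follows from Theorem~\ref{T:DK} after rescaling, and which satisfies $M_{*}\subseteq M_{n}$ for all $n$ by a second induction using $\tfrac1\rho\conv(\bigcup_{i}A_{i}M_{*})=M_{*}$), and then passes to the limit in~\eqref{Eq:Mseq}. You instead dualize the recursion and reduce everything to Lemma~\ref{L:extlim} applied to $\setA^{T}$, which is economical, makes the duality between Lemmas~\ref{L:extlim} and~\ref{L:dkplim} explicit in the spirit of Subsections~\ref{SS:BeqDK} and~\ref{S:th2-proof}, and correctly isolates the one delicate point (continuity of polarity, handled by the sandwich $\tfrac1\alpha S_{*}\subseteq S_{n}\subseteq\tfrac1\beta S_{*}$). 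The trade-off is that your key step $S_{n+1}=\rho\bigcap_{i}(A_{i}^{T})^{-1}S_{n}$ invokes property (v) of polars, which the paper states only for non-degenerate matrices; the lemma itself assumes no non-singularity, and the paper's direct proof deliberately avoids inverses for exactly this reason (cf.\ Remarks~\ref{rem:alert} and~\ref{rem:other3}). Your argument survives for singular $A_{i}$ if $(A_{i}^{T})^{-1}$ is read as a set-theoretic preimage (the identity $(A_{i}M)^{\circ}=\{y:A_{i}^{T}y\in M^{\circ}\}$ holds in general, and $\max_{i}\|A_{i}^{T}x\|_{n}$ is still a genuine norm because $\bigcap_{i}\ker A_{i}^{T}$ is a common invariant subspace, hence trivial by irreducibility), but as written this caveat should be stated. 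One further small point: Lemma~\ref{L:extlim} gives pointwise convergence of the norms, so promoting this to Hausdorff convergence of the balls $S_{n}$ uses the standard fact that pointwise convergence of norms on $\mathbb{R}^{d}$, together with your uniform two-sided bounds, is uniform on compacta; this deserves a sentence.
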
 
\begin{proof} 
  Since by condition $S$ is a unit ball of the extremal norm $\|\cdot\|$, 
  then  \begin{equation}\label{Eq:Mzero} A_{i}M_{0}=A_{i}S\subseteq\rho 
  S=\rho M_{0},\qquad\forall A_{i}\in\setA. 
  \end{equation} 
  We will prove by induction that in this case for any $n=0,1,\ldots$ the 
  relations 
  \begin{align}\label{Eq:first} 
    A_{i}M_{n}                & \subseteq \rho M_{n}, \\ \label{Eq:second} 
    M_{n+1} & \subseteq M_{n}, 
  \end{align} 
  will be satisfied. 

  Let us first prove the induction statement for $n=0$. 
  Inclusion~\eqref{Eq:first} in this case follows from~\eqref{Eq:Mzero}. Then 
  to prove inclusion~\eqref{Eq:second} it suffices to note that 
  by~\eqref{Eq:Mseq} 
  \[ 
    M_{1}=\frac{1}{\rho}\conv\left(\bigcup_{i}A_{i}M_{0}\right)\subseteq 
    \frac{1}{\rho}\conv(\rho M_{0})=M_{0}. 
  \] 

  Let us perform the induction step: suppose that relations~\eqref{Eq:first} 
  and~\eqref{Eq:second} are true for all $n=0,1,\ldots,k$ and prove that they 
  are true for $n=k+1$. In this case, for each $i=1,2,\ldots,m$ we have: 
  \begin{align*} 
    A_{i}M_{k+1}&\stackrel{(1)}{=}\frac{1}{\rho}A_{i}\conv\left(\bigcup_{j}A_{j}M_{k}\right)= 
    \frac{1}{\rho}\conv\left(\bigcup_{j}A_{i}A_{j}M_{k}\right)\\ 
    &\stackrel{(2)}{\subseteq}\frac{1}{\rho}\conv\left(\rho\bigcup_{j}A_{i}M_{k}\right)= 
    A_{i}M_{k}\subseteq \bigcup_{i}A_{i}M_{k}\\
    &\subseteq\conv\left(\bigcup_{i}A_{i}M_{k}\right)\stackrel{(3)}{=}\rho M_{k+1}. 
  \end{align*} 
  Here equalities (1) and (3) are a consequence of 
  definition~\eqref{Eq:Mseq}, inclusion (2) follows from the inclusion 
  $A_{j}M_{k}\subseteq \rho M_{k}$, which is valid for each $j=1,2\ldots,m$ 
  by the induction hypothesis, and the remaining equalities or inclusions are 
  obvious. 

  It remains to prove inclusion~\eqref{Eq:second} for $n=k+1$. Here it 
  suffices to repeat the calculations carried out earlier for the case $n=0$: 
  \[ 
    M_{k+2}=\frac{1}{\rho}\conv\left(\bigcup_{i}A_{i}M_{k+1}\right)\subseteq 
    \frac{1}{\rho}\conv(\rho M_{k+1})=M_{k+1}. 
  \] 
  The induction step is complete and, therefore, inclusions~\eqref{Eq:first} 
  and~\eqref{Eq:second} are proved. 

  Finally, we define an arbitrary \textit{DK}-body~$M_{*}$ satisfying the 
  relation 
  \[ 
    M_{*}\subseteq S_{0}=M_{0}, 
  \] 
  and show that in this case 
  \begin{equation}\label{Eq:lowbound} 
    M_{*}\subseteq M_{n},\qquad n=0,1,2,\ldots. 
  \end{equation} 
  To prove these inclusions by induction, it suffices to show that 
  \[ 
    M_{*}\subseteq M_{n}\quad\Longrightarrow\quad M_{*}\subseteq M_{n+1}. 
  \] 
  Indeed, by definition and the induction hypothesis 
  \[ 
    M_{n+1}=\frac{1}{\rho}\conv\left(\bigcup_{i}A_{i}M_{n}\right)\supseteq 
    \frac{1}{\rho}\conv\left(\bigcup_{i}A_{i}M_{*}\right)=M_{*}, 
  \] 
  where the last equality holds due to the assumption that $M_{*}$ is a 
  \textit{DK}-body. Thus, relations~\eqref{Eq:lowbound} are proved. 

  It follows from inclusions~\eqref{Eq:second} and~\eqref{Eq:lowbound} that 
  the sequence of bodies $\{M_{n}\}$ is monotonically decreasing (in the 
  sense of the inclusion operation) and is bounded from below (each element 
  contains) a nonzero body $M_{*}$. Then the sequence of bodies $\{M_{n}\}$ 
  converges in the natural sense to some nonzero body~$M$, for which passing 
  to the limit in~\eqref{Eq:Mseq} we obtain the equality 
  \[ 
    M=\frac{1}{\rho}\conv\left(\bigcup_{i}A_{i}M\right)=\boldsymbol{A}M. 
  \] 
  By virtue of Theorem~\ref{T:DK}, the obtained relation says precisely that 
  the limit set~$M$ is a \textit{DK}-body. 

  Lemma~\ref{L:dkplim} is proved. 
\end{proof} 

\begin{remark}\label{rem:alas} 
  Lemmas~\ref{L:ext2bar}--\ref{L:dkplim} are generally of theoretical 
  interest only and are of little use for the practical construction of a 
  \textit{B}-norm from a known extremal norm, since 
  \begin{itemize} 
\item the joint spectral radius $\rho=\rho(\setA)$ a'priori is usually 
    unknown; 
\item Lemmas~\ref{L:extlim} and~\ref{L:dkplim} do not provide any 
    information about the rate of convergence of extremal norms 
    $\{\|\cdot\|_{n}\}$ to the \textit{B}-norm~$\|\cdot\|_{\infty}$. 
  \end{itemize} 
\end{remark} 

However, in one special case, the extremal norm for a set of matrices $\setA$ 
can still be specified explicitly. Let the set $\setA=\{A_{1},\ldots,A_{m}\}$ 
consist of real symmetric $d\times d$ matrices: 
\[ 
  A_{i}^{T}=A_{i},\qquad i=1,2\ldots,m. 
\] 
In this case, in the Euclidean norm 
\[ 
  \|x\|=\sqrt{(x,x)},\qquad x\in\mathbb{R}^{d}, 
\] 
the inequalities 
\[ 
  \|A_{i}\|=\rho(A_{i}),\qquad i=1,2\ldots,m, 
\] 
are satisfied where $\rho(A_{i})$ denotes the spectral radius of the matrix 
$A_{i}$. It follows that the Euclidean norm is extremal for a set of matrices 
$\setA$ whose joint/generalized spectral radius is defined by the equality 
\[ 
  \rho(\setA)=\bar\rho(\setA)=\max_{i}\rho(A_{i}). 
\] 
In this case, the joint/generalized spectral radius is achieved on the matrix 
$A_{i_{*}}$ for which 
\[ 
  \rho(A_{i_{*}})=\rho(\setA). 
\] 

\begin{remark}\label{rem:sym-matsets} 
  A generalization of the set of symmetric matrices is the \emph{symmetric 
  set of matrices} $\setA=\{A_{1},\ldots,A_{m}\}$ introduced 
  in~\cite{PW:LAA08}, which has the property that, together with each matrix, 
  this set also contains a matrix symmetric to it. In this case, the 
  Euclidean norm is also extremal for the set of matrices~$\setA$, and the 
  generalized/joint spectral radius is achieved at one of the matrix 
  products $A^{T}_{i}A_{i}$. 
\end{remark} 

\begin{remark}\label{rem:sym-nonelliptic} 
  At first glance, it is quite unexpected that, unlike the ``nice = smooth = 
  ellipsoidal'' extremal norm for a set of symmetric matrices~$\setA$, the 
  corresponding \textit{B}-norm cannot be found explicitly in general and, as 
  Example~\ref{ex1} shows, has an ``angular'' unit ball. The corresponding 
  \textit{DK}-body is also ``angular''. 
\end{remark} 

\begin{example}\label{ex1} Consider a set of symmetric matrices 
  $\setA=\{A_{1},A_{2}\}$, where 
  \[ 
    A_{1}= 
    \begin{bmatrix}1.1 & 0 \\0 & 0.7 
    \end{bmatrix},\quad 
    A_{2}= 
    \begin{bmatrix}1 & 0.2 \\0.2 & 1 
    \end{bmatrix}. 
  \] 
  For these matrices 
  \[ 
    \rho(A_{1})=1.1,\quad\rho(A_{2})=1.2,\quad\rho(\setA)=1.2, 
  \] 
  where the invariant subspaces of the matrix $A_{1}$ coincide with the 
  coordinate axes, and the invariant subspaces of the matrix $A_{2}$ coincide 
  with the bisectors of the angles between the coordinate axes. The unit ball 
  of the \textit{B}-norm and the \textit{DK}-body for the set of matrices 
  $\setA$ are shown in Fig.~\ref{F:2}. 
\end{example}
  \begin{figure}[htbp!] 
    \centering \subcaptionbox{Unit ball of the Barabanov norm $\|\cdot\|$~--- 
    black solid line,\\ $\{x:\|A_{1}x\|\le \rho\}$~--- red dotted line,\\ 
    $\{x:\|A_{2}x\|\le \rho\}$~--- blue dash-dotted line\label{F:2a}} 
    {\includegraphics*[width=0.48\textwidth]{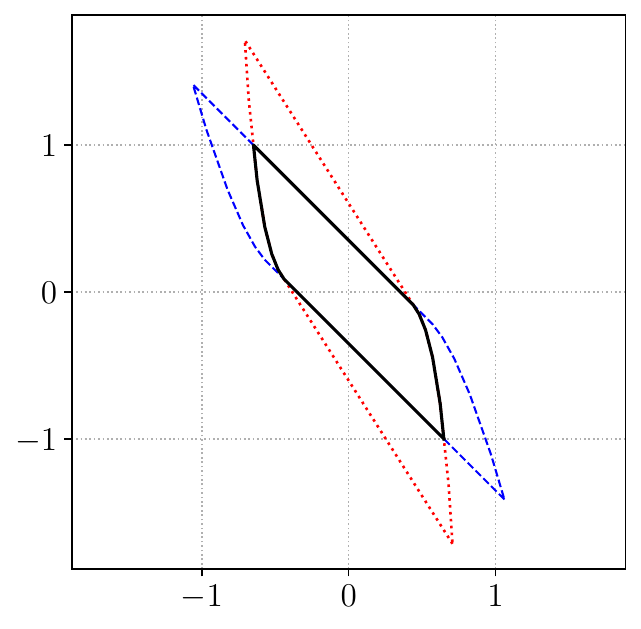}} 
    \hfill\subcaptionbox{Dranishnikov--Konyagin body~--- black solid line,\\ 
    $\rho^{-1}A_{1}M$~--- red dotted line,\\ $\rho^{-1}A_{2}M$~--- blue 
    dashed-dotted line\label{F:2b}} 
    {\includegraphics*[width=0.48\textwidth]{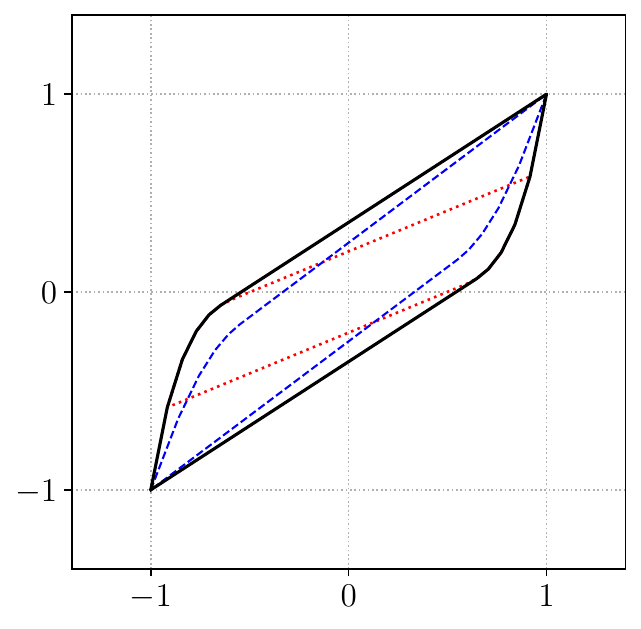}} \caption{The 
    unit ball of the Barabanov norm and the Dranishnikov--Konyagin body for 
    the set of matrices from the set~$\setA$}\label{F:2} 
  \end{figure}

\begin{remark}\label{rem:max-relax} 
  The approximate construction of the \textit{B}-norm in Example~\ref{ex1} 
  was performed using the max-relaxation algorithm~\cite{Koz:DCDSB10, 
  Koz:ArXiv10:1}, examples of software implementation of which are given on 
  the websites \url{https://github.com/kozyakin/barnorm} and 
  \url{https://github.com/kozyakin/spectrum_maximizing_products}. At the same 
  time, the results of Example~\ref{ex1} could have been obtained using a 
  technically somewhat simpler algorithm based on 
  formulas~\eqref{eq:sefnormsn}. 
\end{remark} 

\section{Conclusion}\label{S:conclusion} 
The paper presents a set of techniques that can significantly simplify the 
practical construction of Barabanov norms for a set of matrices. 

It would seem that with such a powerful tool as the 
\href{https://gitlab.com/tommsch/ttoolboxes}{t-toolboxs} extension package 
for MATLAB described in~\cite{Mejstrik:ACMTMS20, MejReif:LAA25}, further 
attempts to develop algorithms for finding the generalized/joint spectral 
radius are mostly pointless. Note, however, that the practical application of 
the \texttt{t-toolboxs} package has a number of limitations: 
\begin{itemize} 
\item firstly, this package is mainly focused on calculating the 
    generalized/joint radius, while constructing extremal norms with its 
    help and, in particular, Barabanov norms requires some additional 
    effort; 
\item secondly, this package is intended for use in the (commercial=paid) 
    MATLAB program and also requires a number of paid MATLAB add-ons for 
    its use; 
\item thirdly, this package is quite large ($\sim15$ Mb). 
\end{itemize} 

The algorithm proposed in this paper is more focused on calculating Barabanov 
norms or Dranishnikov--Konyagin--Protasov bodies/norms. Moreover, as can be 
seen from the listing attached in Appendix~\ref{A:PyCode}, it is implemented 
in a free software environment (Python) and essentially takes up no more than 
150 lines of computer code ($\sim8$~Kb), which makes it applicable for simple 
everyday use in scientific research even by students! 

\appendix 
\section{Program for calculating the Dranishnikov-Konyagin body and the 
Barabanov norm}\label{A:PyCode} The code below is implemented in the 
\texttt{Python} programming language of the \texttt{Python}~3.13.5 
distribution and, together with other examples of calculating the Barabanov 
norm using Dranishnikov--Konyagin bodies, is available for download from the 
site \url{https://github.com/kozyakin/barnorm_via_dkbody}. The modules used 
are \texttt{matplotlib} v3.10.5, \texttt{numpy} v2.3.1, \texttt{shapely} 
v2.1.1. 

\medskip 

\lstinputlisting[caption={Python code \texttt{\detokenize{barnorm_v2.py}} to 
compute the Dranishnikov--Konyagin body and the Barabanov norm of a pair of 
matrices}, label=L:code]{barnorm_v2.py} 


\begin{thebibliography}{10}
	\def\mrref#1{\href{https://www.ams.org/mathscinet-getitem?mr=#1}{MR~#1}}
	\def\zblref#1{\href{https://zbmath.org/?q=an:#1}{Zbl~#1}}
	\expandafter\ifx\csname url\endcsname\relax
	\def\url#1{\texttt{#1}}\fi
	\expandafter\ifx\csname urlprefix\endcsname\relax\def\urlprefix{URL }\fi
	\expandafter\ifx\csname href\endcsname\relax
	\def\href#1#2{#2} \def\path#1{#1}\fi
	\providecommand{\bbljan}[0]{January} \providecommand{\bblfeb}[0]{February}
	\providecommand{\bblmar}[0]{March} \providecommand{\bblapr}[0]{April}
	\providecommand{\bblmay}[0]{May} \providecommand{\bbljun}[0]{June}
	\providecommand{\bbljul}[0]{July} \providecommand{\bblaug}[0]{August}
	\providecommand{\bblsep}[0]{September} 
	\providecommand{\bbloct}[0]{October}
	\providecommand{\bblnov}[0]{November} 
	\providecommand{\bbldec}[0]{December}
	\providecommand{\bbljan}[0]{January} \providecommand{\bblfeb}[0]{February}
	\providecommand{\bblmar}[0]{March} \providecommand{\bblapr}[0]{April}
	\providecommand{\bblmay}[0]{May} \providecommand{\bbljun}[0]{June}
	\providecommand{\bbljul}[0]{July} \providecommand{\bblaug}[0]{August}
	\providecommand{\bblsep}[0]{September} 
	\providecommand{\bbloct}[0]{October}
	\providecommand{\bblnov}[0]{November} 
	\providecommand{\bbldec}[0]{December}
	
	\bibitem{Bar:AIT88-2:e}
	N.~E. Barabanov, \emph{{L}yapunov indicator of discrete inclusions. {I}},
	Autom. Remote Control \textbf{49} (1988), no.~2, 152--157. \mrref{940263}.
	\zblref{0665.93043}.
	
	\bibitem{Bar:AIT88-3:e}
	N.~E. Barabanov, \emph{The {L}yapunov indicator of discrete inclusions. 
	{II}},
	Autom. Remote Control \textbf{49} (1988), no.~3, 283--287. \mrref{943889}.
	\zblref{0665.93044}.
	
	\bibitem{Bar:AIT88-5:e}
	N.~E. Barabanov, \emph{The {L}yapunov indicator of discrete inclusions. 
	{III}},
	Autom. Remote Control \textbf{49} (1988), no.~5, 558--565. \mrref{952665}.
	\zblref{0665.93045}.
	
	\bibitem{Bar:ACC95}
	N.~E. Barabanov, \emph{Stability of inclusions of linear type}, American
	Control Conference, Proceedings of the 1995, vol.~5, June 1995,
	pp.~3366--3370.
	\newblock \href{https://doi.org/10.1109/ACC.1995.532231}
	{\path{doi:10.1109/ACC.1995.532231}}.
	
	\bibitem{BerWang:LAA92}
	M.~A. Berger and Y.~Wang, \emph{Bounded semigroups of matrices}, Linear 
	Algebra
	Appl. \textbf{166} (1992), 21--27.
	\newblock \href{https://doi.org/10.1016/0024-3795(92)90267-E}
	{\path{doi:10.1016/0024-3795(92)90267-E}}. \mrref{1152485}.
	\zblref{0818.15006}.
	
	\bibitem{BTV:SIAMJMA03}
	V.~D. Blondel, J.~Theys, and A.~A. Vladimirov, \emph{An elementary
		counterexample to the finiteness conjecture}, SIAM J. Matrix Anal. 
		Appl.
	\textbf{24} (2003), no.~4, 963--970 (electronic).
	\newblock \href{https://doi.org/10.1137/S0895479801397846}
	{\path{doi:10.1137/S0895479801397846}}. \mrref{2003315}. 
	\zblref{1043.15007}.
	
	\bibitem{BM:JAMS02}
	T.~Bousch and J.~Mairesse, \emph{Asymptotic height optimization for 
	topical
		{IFS}, {T}etris heaps, and the finiteness conjecture}, J. Amer. Math. 
		Soc.
	\textbf{15} (2002), no.~1, 77--111 (electronic).
	\newblock \href{https://doi.org/10.1090/S0894-0347-01-00378-2}
	{\path{doi:10.1090/S0894-0347-01-00378-2}}. \mrref{1862798}.
	\zblref{1057.49007}.
	
	\bibitem{DaubLag:LAA92}
	I.~Daubechies and J.~C. Lagarias, \emph{Sets of matrices all infinite 
	products
		of which converge}, Linear Algebra Appl. \textbf{161} (1992), 
		227--263.
	\newblock \href{https://doi.org/10.1016/0024-3795(92)90012-Y}
	{\path{doi:10.1016/0024-3795(92)90012-Y}}. \mrref{1142737}.
	\zblref{0746.15015}.
	
	\bibitem{GugProt:FCM13}
	N.~Guglielmi and V.~Protasov, \emph{Exact computation of joint spectral
		characteristics of linear operators}, Found. Comput. Math. \textbf{13}
	(2013), no.~1, 37--97.
	\newblock \href{https://doi.org/10.1007/s10208-012-9121-0}
	{\path{doi:10.1007/s10208-012-9121-0}},
	\href{https://arxiv.org/abs/1106.3755} {\path{arXiv:1106.3755}}.
	\mrref{3009529}. \zblref{06153962}.
	
	\bibitem{GugZen:CDC05}
	N.~Guglielmi and M.~Zennaro, \emph{Polytope norms and related algorithms 
	for
		the computation of the joint spectral radius}, Proceedings of the 
		44th {IEEE}
	Conference on Decision and Control and European Control Conference 2005,
	Seville, Spain, December 12--15, 2005, pp.~3007--3012.
	\newblock \href{https://doi.org/10.1109/CDC.2005.1582622}
	{\path{doi:10.1109/CDC.2005.1582622}}.
	
	\bibitem{HJ2:e}
	R.~A. Horn and C.~R. Johnson, \emph{Matrix analysis}, second ed., 
	Cambridge
	University Press, Cambridge, 2013. \mrref{2978290}. \zblref{1267.15001}.
	
	\bibitem{Jungers:09}
	R.~Jungers, \emph{The joint spectral radius}, Lecture Notes in Control and
	Information Sciences, vol. 385, Springer-Verlag, Berlin, 2009.
	\newblock {T}heory and applications.
	\newblock \href{https://doi.org/10.1007/978-3-540-95980-9}
	{\path{doi:10.1007/978-3-540-95980-9}}. \mrref{2507938}.
	
	\bibitem{Koz:CDC05:e}
	V.~Kozyakin, \emph{A dynamical systems construction of a counterexample 
	to the
		finiteness conjecture}, Proceedings of the 44th IEEE Conference on 
		Decision
	and Control, 2005 and 2005 European Control Conference. CDC-ECC'05., 2005,
	pp.~2338--2343.
	\newblock \href{https://doi.org/10.1109/CDC.2005.1582511}
	{\path{doi:10.1109/CDC.2005.1582511}}.
	
	\bibitem{Koz:AiT90:6:e}
	V.~S. Kozyakin, \emph{Algebraic unsolvability of problem of absolute 
	stability
		of desynchronized systems}, Autom. Remote Control \textbf{51} (1990), 
		no.~6,
	754--759. \mrref{1071607}. \zblref{0737.93056}.
	
	\bibitem{Koz:AiT03:9:e}
	V.~S. Kozyakin, \emph{Indefinability in o-minimal structures of finite 
	sets of
		matrices whose infinite products converge and are bounded or 
		unbounded},
	Autom. Remote Control \textbf{64} (2003), no.~9, 1386--1400.
	\newblock \href{https://doi.org/10.1023/A:1026091717271}
	{\path{doi:10.1023/A:1026091717271}}. \mrref{2090805}. 
	\zblref{1078.93017}.
	
	\bibitem{Koz:INFOPROC05:e}
	V.~S. Kozyakin, \emph{Rotation numbers of discontinuous 
	orientation-preserving
		circle maps revisited}, Information Processes \textbf{5} (2005), 
		no.~4,
	301--335.
	\newblock \urlprefix\url{http://www.jip.ru/2005/283-300.pdf}.
	
	\bibitem{Koz:INFOPROC06:e}
	V.~S. Kozyakin, \emph{Structure of extremal trajectories of discrete 
	linear
		systems and the finiteness conjecture}, Autom. Remote Control 
		\textbf{68}
	(2007), no.~1, 174--209.
	\newblock \href{https://doi.org/10.1134/S0005117906040171}
	{\path{doi:10.1134/S0005117906040171}}. \zblref{1195.93082}.
	
	\bibitem{Koz:DAN09:e}
	V.~S. Kozyakin, \emph{On the computational aspects of the theory of joint
		spectral radius}, Dokl. Math. \textbf{80} (2009), no.~1, 487--491.
	\newblock \href{https://doi.org/10.1134/S1064562409040097}
	{\path{doi:10.1134/S1064562409040097}}. \mrref{2573049}. 
	\zblref{1190.93035}.
	
	\bibitem{Koz:DCDSB10}
	V.~Kozyakin, \emph{Iterative building of {B}arabanov norms and 
	computation of
		the joint spectral radius for matrix sets}, Discrete Contin. Dyn. 
		Syst. Ser.
	B \textbf{14} (2010), no.~1, 143--158.
	\newblock \href{https://doi.org/10.3934/dcdsb.2010.14.143}
	{\path{doi:10.3934/dcdsb.2010.14.143}},
	\href{https://arxiv.org/abs/0810.2154} {\path{arXiv:0810.2154}}.
	\mrref{2644257}. \zblref{1201.65067}.
	
	\bibitem{Koz:ArXiv10:1}
	V.~Kozyakin, \emph{Max-{R}elaxation iteration procedure for building of
		{B}arabanov norms: Convergence and examples}, ArXiv.org e-Print 
		archive,
	February 2010.
	\newblock \href{https://arxiv.org/abs/1002.3251} {\path{arXiv:1002.3251}}.
	
	\bibitem{Koz:IITP13}
	V.~Kozyakin, \emph{An annotated bibliography on the convergence of matrix
		products and the theory of joint/generalized spectral radius}, 
		Preprint,
	Institute for Information Transmission Problems, Moscow, December 2013.
	\newblock \href{https://doi.org/10.13140/RG.2.1.4257.5040/1}
	{\path{doi:10.13140/RG.2.1.4257.5040/1}}.
	
	\bibitem{LagWang:LAA95}
	J.~C. Lagarias and Y.~Wang, \emph{The finiteness conjecture for the 
	generalized
		spectral radius of a set of matrices}, Linear Algebra Appl. 
		\textbf{214}
	(1995), 17--42.
	\newblock \href{https://doi.org/10.1016/0024-3795(93)00052-2}
	{\path{doi:10.1016/0024-3795(93)00052-2}}. \mrref{1311628}.
	\zblref{0818.15007}.
	
	\bibitem{Mejstrik:ACMTMS20}
	T.~Mejstrik, \emph{Algorithm 1011: {I}mproved invariant polytope 
	algorithm and
		applications}, ACM Trans. Math. Software \textbf{46} (2020), no.~3, 
		Art. 29,
	26.
	\newblock \href{https://doi.org/10.1145/3408891} 
	{\path{doi:10.1145/3408891}},
	\href{https://arxiv.org/abs/1812.03080} {\path{arXiv:1812.03080}}.
	\mrref{4161245}. \zblref{1484.65090}.
	
	\bibitem{MejReif:LAA25}
	T.~Mejstrik and U.~Reif, \emph{A hybrid approach to joint spectral radius
		computation}, Linear Algebra Appl. (2025).
	\newblock In Press, Corrected Proof.
	\newblock \href{https://doi.org/10.1016/j.laa.2025.06.024}
	{\path{doi:10.1016/j.laa.2025.06.024}},
	\href{https://arxiv.org/abs/2308.05244} {\path{arXiv:2308.05244}}.
	
	\bibitem{PW:LAA08}
	E.~Plischke and F.~Wirth, \emph{Duality results for the joint spectral 
	radius
		and transient behavior}, Linear Algebra Appl. \textbf{428} (2008), 
		no.~10,
	2368--2384.
	\newblock \href{https://doi.org/10.1016/j.laa.2007.12.009}
	{\path{doi:10.1016/j.laa.2007.12.009}}. \mrref{2408033}. 
	\zblref{05268361}.
	
	\bibitem{PWB:CDC05}
	E.~Plischke, F.~Wirth, and N.~Barabanov, \emph{Duality results for the 
	joint
		spectral radius and transient behavior}, Proceedings of the 44th 
		{IEEE}
	Conference on Decision and Control and European Control Conference 2005,
	Seville, Spain, December 12--15, 2005, pp.~2344--2349.
	\newblock \href{https://doi.org/10.1109/CDC.2005.1582512}
	{\path{doi:10.1109/CDC.2005.1582512}}.
	
	\bibitem{Prot:FPM96:e}
	V.~{\relax{}Yu}. Protasov, \emph{The joint spectral radius and invariant 
	sets
		of linear operators}, Fundam. Prikl. Mat. \textbf{2} (1996), no.~1, 
		205--231.
	\newblock In Russian. \mrref{1789006}. \zblref{0899.47002}.
	
	\bibitem{RobRob:e}
	A.~P. Robertson and W.~J. Robertson, \emph{Topological vector spaces},
	Cambridge Tracts in Mathematics and Mathematical Physics, No. 53, 
	Cambridge
	University Press, New York, 1964. \mrref{0162118}.
	
	\bibitem{RotaStr:IM60}
	G.-C. Rota and G.~Strang, \emph{A note on the joint spectral radius}, 
	Indag.
	Math. \textbf{63} (1960), 379--381.
	\newblock Original journal title: Nederl. Akad. Wetensch. Proc. Ser. A 
	{\bf
		63}.
	\newblock \href{https://doi.org/10.1016/S1385-7258(60)50046-1}
	{\path{doi:10.1016/S1385-7258(60)50046-1}}. \mrref{0147922}.
	\zblref{0095.09701}.
	
	\bibitem{TB:MCSS97:1}
	J.~N. Tsitsiklis and V.~D. Blondel, \emph{The {L}yapunov exponent and 
	joint
		spectral radius of pairs of matrices are hard --- when not impossible 
		--- to
		compute and to approximate}, Math. Control Signals Systems \textbf{10}
	(1997), no.~1, 31--40.
	\newblock \href{https://doi.org/10.1007/BF01219774}
	{\path{doi:10.1007/BF01219774}}. \mrref{1462278}. \zblref{0888.65044}.
	
	\bibitem{VHJ:ACM14}
	G.~Vankeerberghen, J.~Hendrickx, and R.~M. Jungers, \emph{{JSR}: {A} 
	toolbox to
		compute the joint spectral radius}, Proceedings of the 17th 
		International
	Conference on Hybrid Systems: Computation and Control (New York, NY, USA),
	HSCC'14, ACM, 2014, pp.~151--156.
	\newblock \href{https://doi.org/10.1145/2562059.2562124}
	{\path{doi:10.1145/2562059.2562124}}. \mrref{3388353}. 
	\zblref{1364.65099}.
	\KeyWords{joint spectral radius; {M}atlab; stability; switched systems}
	
	\bibitem{Wirth:LAA02}
	F.~Wirth, \emph{The generalized spectral radius and extremal norms}, 
	Linear
	Algebra Appl. \textbf{342} (2002), 17--40.
	\newblock \href{https://doi.org/10.1016/S0024-3795(01)00446-3}
	{\path{doi:10.1016/S0024-3795(01)00446-3}}. \mrref{1873424}.
	\zblref{0996.15020}.
	
\end{thebibliography}

\providecommand{\KeyWords}[1]{#1}

\end{document}